\documentclass[11pt,a4paper]{article}
\usepackage[american]{babel}      

\usepackage{graphicx} 
\usepackage{amsmath}
\usepackage{mathrsfs}
\usepackage{amssymb}
\usepackage{xcolor}
\usepackage{mathtools}
\usepackage{amsthm}
\usepackage{comment}
\usepackage{float}

\theoremstyle{plain}
\newtheorem{theorem}{Theorem}[section]
\theoremstyle{plain}

\theoremstyle{plain}
\newtheorem{proposition}{Proposition}[section]
\theoremstyle{plain}
\newtheorem{lemma}{Lemma}[section]
\theoremstyle{plain}
\newtheorem{remark}{Remark}[section]
\theoremstyle{plain}

\theoremstyle{plain}

\newcommand{\mau}{\geq}
\newcommand{\miu}{\leq}

\newcommand{\N}{\mathbb{N}}
\newcommand{\R}{\mathbb{R}}

\newcommand{\be}{\begin{equation}}
\newcommand{\ee}{\end{equation}}

\title{A new class of positive linear operators preserving logarithmic functions}
\author{\textit{Laura Angeloni\thanks{Corresponding author}$\ ^1$, Danilo Costarelli$\ ^1$, Chiara Darielli$\ ^2$} \\
\\
$\ ^1$ Department of Mathematics and Computer Science \\
            University of Perugia\\
       1, Via Vanvitelli, 06123 Perugia, Italy    \\  \\
 $\ ^2$ Department of Mathematics and Computer Science\\ 
 University of Firenze\\ 67, Viale Morgagni, 50134 Firenze,  Italy
       \\  \\
  {\small {\tt laura.angeloni@unipg.it}} - {\small {\tt danilo.costarelli@unipg.it}}\\  {\small {\tt 
 chiara.darielli@unifi.it}} 
  }
\date{}

\begin{document}
\maketitle
\begin{abstract}
In this paper, we introduce a new class of positive linear operators that generalize the classical Bernstein operators. Specifically, we construct a sequence of operators that preserve the logarithmic function $\ln(1+\mu+x)$, with $\mu > 0$ and $x \in [0,1]$. We prove pointwise and uniform convergence and we derive a quantitative estimate of the approximation error in terms of the modulus of continuity. We also obtain a Voronovskaja-type asymptotic formula, that is used to establish saturation results and inverse theorems. In particular, the saturation class of the considered approximation process is characterized by solving a second order differential equation. Shape-preserving properties, such as monotonicity, concavity and variation diminishing, are also investigated. Finally, a simple application to signal denoising is addressed.

\end{abstract}
\vskip0.2cm

\noindent {\bf Keywords:} asymptotic approximation; positive linear operators; logarithm preservation; constructive approximation; saturation by solving differential problems; shape preserving; denoising. 

\vskip0.3cm

\noindent {\bf AMS Subjclass:} 41A60, 41A25, 41A30   

\section{Introduction}
The Bernstein polynomials, introduced by Sergej Bernstein in 1912, are defined for a function $f \in C([0,1])$ and for $n \in \mathbb{N}$ as
\begin{align*}
B_nf(x)=B_n(f,x) := \sum_{k=0}^{n} f\left(\frac{k}{n}\right) \binom{n}{k} x^k (1-x)^{n-k}, 
\end{align*} with $x \in [0,1]$.
Such operators provide an important tool for approximation and their properties play a crucial role in this context (see, e.g., \cite{ConApp}). 

Indeed, as it is well-known, they provide a constructive proof of the classical Weierstrass approximation theorem, which asserts that every continuous function on a closed and bounded interval can be uniformly approximated through polynomials. This result is a cornerstone of polynomial approximation, one of the main topics in Approximation Theory, and the Bernstein polynomials remain one of its most studied tools.

Over the years, many extensions of the Bernstein polynomials have been proposed in the literature, including, for example, the sampling-type operators (\cite{bardaro2017,bardaro2019}), the max-product operators (\cite{bede2016}), the neural network-type operators (\cite{kadak2022}), the Sz{\'a}sz-Mirakjan operators (\cite{acar2017szasz,acar2017gonska,gupta2018,Aral2019}), and many others (\cite{gupta2020,acar2020gamma,ozsarac2022,gupta2024new,gupta2024conv}).

Among these developments, an exponential generalization of the classical Bernstein polynomials was introduced by Aral, Cárdenas-Morales and Garrancho in~\cite{BerType}. This construction, known as Bernstein-type exponential polynomials, is a special case of a more general family of operators previously studied by Morigi and Neamtu in~\cite{Morigi}.

For a fixed real parameter $\mu > 0$, for $f\in C([0,1])$ and for $n \in \mathbb{N}$, these operators are defined as
\begin{align*} \displaystyle
    \mathscr{G}_nf(x) = \mathscr{G}_n(f,x):= \sum_{k=0}^n f\left( \frac{k}{n} \right) e^{-\mu k / n}e^{\mu x} p_{n,k}(\tilde a_n(x)),
\end{align*} where
\begin{align*}
    \tilde a_n(x)=\frac{e^{\mu x/n}-1}{e^{\mu/n}-1},\quad p_{n,k}(x):= \binom{n}{k} x^k (1-x)^{n-k},
\end{align*}
with $x \in [0,1]$. Such operators are naturally linked to the Bernstein polynomials $B_n$, since obviously 
\begin{align}\label{relexp}
    \mathscr{G}_n(f,x)=\exp_{\mu}(x)\,B_n\left(\frac{f}{\exp_\mu},\tilde a_n(x)\right),
\end{align}
where $\exp_\mu(x) := e^{\mu x}$. As a consequence, it can be shown that $\mathscr{G}_n$ preserves the exponential functions $\exp_\mu(x)$ and $\exp_\mu^2(x)$, i.e., for $x \in [0,1]$,
\begin{align*}
\mathscr{G}_n(\exp_\mu,x) = e^{\mu x}, \quad \mathscr{G}_n(\exp_\mu^2,x)= e^{2\mu x}.
\end{align*}
By this property, the operators $\mathscr{G}_n$ can be regarded as a particular case of the so-called King-type operators, introduced by King in \cite{king}.

For further results on approximation properties, shape-preserving characteristics, and simultaneous approximation, we refer to~\cite{Mond,Convexity,BerType,Acu2022a}. For extensions to the multidimensional case, see~\cite{Angeloni2025}.

Within this line of research, in the present paper we introduce a new class of positive linear operators, below denoted by \(\mathscr{L}_n\), that preserve the logarithmic function \(\ln_\mu(x) = \ln(1+\mu + x)\), with \(\mu > 0\) and \(x \in [0,1]\). The idea is to replace the exponential weights in $\mathscr{G}_n $ with suitable logarithmic ones. Moreover, the functions $\tilde a_n$ are replaced by suitable concave approximants of the identity function involving $\ln_{\mu}(x)$. Differently from the exponential type operators, in the literature there is a lack of results concerning operators that preserve logarithmic functions. We mention that some results in this direction can be found in \cite{ozsarac2023}, but by means of integral operators of Mellin type, therefore in a completely different setting. Besides this, a further motivation for the introduction of the operators \(\mathscr{L}_n\) is the fact that the properties of the logarithmic function could be useful to linearize nonlinear transformations of multiplicative type.

After the introduction of the proposed new operators, we investigate their main approximation properties. In particular, we establish pointwise and uniform convergence by means of a direct constructive technique. The uniform convergence can be faced alternatively using a classical Korovkin approach: in this setting, a Korovkin subset is provided by the powers 0,1,2 of the function $\ln_{\mu}(x)$. 

Furthermore, the relation with King-type operators is explored. This allows us to derive a quantitative estimate of the approximation error for the operators $\mathscr{L}_n$ and to formulate a Voronovskaja-type asymptotic result. Then, we use this formula to obtain saturation results and inverse theorems. In particular, by the Voronovskaja formula, a second order differential operator naturally arises: the set of the solutions of the corresponding homogeneous ODE  represents the saturation class of the considered operators. 
The proof of such result relies on the application of the generalized ''parabola technique'' provided by Garrancho and Cárdenas-Morales (\cite{GACA2010}).

To complete the study of the operators, we also examine some shape-preserving properties concerning monotonicity, concavity and variation diminishing. 

Finally, a "toy-model" related to signal denoising is presented: the idea is to linearize a Gaussian-type multiplicative noise employing a logarithmic transformation and then to apply the logarithmic operators to reconstruct the noise-free signal.

\section{Bernstein-type logarithmic operators}
Inspired by the exponential case, a new class of Bernstein-type operators relying on logarithmic transformations can be defined. The construction is based on replacing the exponential weights with logarithmic ones, leading to a different yet structurally similar approximation scheme. 

To this end, we consider the function 
\begin{align}\label{ln}
\ln_\mu(x):=\ln(1+\mu+x), \qquad x\in[0,1],
\end{align}
where $\mu >0$ is a fixed parameter. Now, for a bounded function $f:[0,1]\rightarrow \mathbb{R}$ and  $n \in \mathbb{N}$ we define the new operator as
\begin{align} \displaystyle \label{GnNew}
    \mathscr{L}_nf(x) = \mathscr{L}_n(f,x):=\ln_\mu(x)\sum_{k=0}^n f\left( \frac{k}{n} \right) \frac{1}{\ln_\mu\left(\frac{k}{n}\right)}\hspace{1mm}p_{n,k}(a_n(x)),
\end{align} where
\begin{align} \label{an}
    \displaystyle
a_n(x)\ :=\ \frac{\ln_\mu\left(\frac{x}{n}\right)-\ln\left(1+\mu\right)}{\ln_\mu\left(\frac{1}{n}\right)-\ln\left(1+\mu\right)}\ =\ \frac{\ln\left(1+\frac{x}{n(1+\mu)}\right)}{\ln\left(1+\frac{1}{n(1+\mu)}\right)},
\end{align} with $x\in[0,1]$. The function $a_n(x)$ is such that \( a_n(0) = 0 \) and \( a_n(1) = 1 \), is increasing and concave on \([0,1]\), and $a_n(x)\ge x$ for every $x\in [0,1]$.

\begin{lemma}\label{lemma_an}
Let \(a_n(x)\) be defined as in (\ref{an}). Then, the sequence \( (a_n(x))_n \) converges uniformly to the identity function \( x \) on \([0,1]\).
\end{lemma}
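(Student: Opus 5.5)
Set $t=\frac{1}{n(1+\mu)}\in(0,1)$, so that $a_n(x)=\ln(1+tx)/\ln(1+t)$. The plan is to bound $a_n(x)-x\ge 0$ by an explicit quantity of order $t$, uniformly in $x\in[0,1]$. The natural route uses the elementary inequalities $u-\frac{u^2}{2}\le \ln(1+u)\le u$ for $u\ge 0$.

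First, apply these inequalities to numerator and denominator to get
\begin{align*}
0\le a_n(x)-x\le \frac{tx}{t-\frac{t^2}{2}}-x = x\left(\frac{1}{1-\frac{t}{2}}-1\right)=\frac{x\,t/2}{1-t/2}\le \frac{t}{2-t}\le t .
\end{align*}
The lower bound $a_n(x)\ge x$ is already stated in the paper; it follows from concavity of $x\mapsto \ln(1+tx)$ together with $a_n(0)=0$ and $a_n(1)=1$. The last inequality uses $t<1$.

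Taking the supremum over $x\in[0,1]$ then gives
\begin{align*}
\sup_{x\in[0,1]}|a_n(x)-x|\le \frac{1}{n(1+\mu)}\longrightarrow 0 \quad (n\to\infty),
\end{align*}
which is the uniform convergence claimed in Lemma \ref{lemma_an}, with rate $O(1/n)$.

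There is no real obstacle here. The only point needing care is to bound the denominator from below, not just the numerator from above, so that the estimate is uniform in $x$. If a sharper constant were wanted, a mean value argument would give it, but it is not necessary.
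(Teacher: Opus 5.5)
Your proof is correct, but it takes a different route from the paper's.

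The paper argues by calculus. It differentiates $a_n(x)-x$ and locates the unique critical point $\bar x_n=\frac{1}{\ln(1+\varepsilon_n)}-\frac{1}{\varepsilon_n}$, where $\varepsilon_n=\frac{1}{n(1+\mu)}$ is your $t$. It then writes the maximum $a_n(\bar x_n)-\bar x_n$ in closed form and uses Taylor expansions of the logarithm to show that this maximum tends to $0$ as $\varepsilon_n\to0^+$. Together with $a_n(x)\ge x$, this gives uniform convergence, but only with an $o(1)$ rate.

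You bypass the maximizer entirely by bounding numerator and denominator with $u-\frac{u^2}{2}\le\ln(1+u)\le u$. This gives the explicit, non-asymptotic bound $0\le a_n(x)-x\le\frac{t}{2-t}\le\frac{1}{n(1+\mu)}$ for every $n$ and every $x\in[0,1]$.

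Your bound is more useful later in the paper. It gives at once $\sqrt n\,\gamma_n\le\frac{1}{(1+\mu)\sqrt n}$, which is \eqref{Ogamma}, and $n(a_n(x)-x)^2\le\frac{1}{n(1+\mu)^2}\to0$ uniformly in $x$. The paper needs both facts, in the quantitative estimate and in the Voronovskaja-type theorem, and obtains them only through the expansions in the subsequent Remark.

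What the exact-maximizer approach offers in return is the sharp constant. Carrying the paper's expansion one order further shows $\gamma_n\sim\frac{\varepsilon_n}{8}$. Your estimate is off by a factor of about $4$, or $8$ after the final simplification $\frac{t}{2-t}\le t$.
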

\begin{proof}
Since 
\begin{align*} \displaystyle
    (a_n(x)-x)'=\frac{1}{\ln\left(1+\frac{1}{n(1+\mu)}\right)}\frac{1}{1+\frac{x}{n(1+\mu)}}\frac{1}{n(1+\mu)}-1=0
\end{align*}
for
\begin{align*} \displaystyle
\bar{x}_n:=&n(1+\mu)\left[\frac{1}{\ln\left(1+\frac{1}{n(1+\mu)}\right)}\frac{1}{n(1+\mu)}-1\right]\\[0.5em]
=&\frac{1-n(1+\mu)\ln{\left(1+\frac{1}{n(1+\mu)}\right)}}{\ln\left(1+\frac{1}{n(1+\mu)}\right)},
\end{align*}
and $(a_n(x)-x)'>0$ if and only if $x<\bar{x}_n$, at the point $\bar{x}_n$ it is attained the maximum value of $(a_n(x)-x)$ in $[0,1]$, namely,
\begin{align*}\displaystyle
    a_n(\bar{x}_n)-\bar{x}_n &= 
    \frac{\ln\left(1+\frac{\bar{x}_n}{n(1+\mu)}\right)}{\ln\left(1+\frac{1}{n(1+\mu)}\right)}-\bar{x}_n\\[0.6em]
    &=\frac{1}{\ln{\left(1+\frac{1}{n(1+\mu)}\right)}}\left[\ln\left(\frac{1}{n(1+\mu)\ln{\left(1+\frac{1}{n(1+\mu)}\right)}}\right)-1\right]+\\[1mm]&\hspace{4mm}+n(1+\mu).
\end{align*}
Setting
\begin{align} \label{varn}
\varepsilon_n := \frac{1}{n(1+\mu)},
\end{align} 
there holds
\begin{align*} \displaystyle
\lim_{n\to+\infty}[a_n(\bar{x}_n)-\bar{x}_n] :=&\lim_{\varepsilon_n\rightarrow 0^+} F(\varepsilon_n)\\ =&\lim_{\varepsilon_n\rightarrow 0^+}\left\{ \frac{1}{\ln(1+\varepsilon_n)}\left[\ln\left(\frac{\varepsilon_n}{\ln(1+\varepsilon_n)}\right)-1\right] + \frac{1}{\varepsilon_n}\right\}.
\end{align*}
By the Taylor expansion of the logarithmic function and the geometric series, 
\[
\ln\left(\frac{\varepsilon_n}{\ln(1 + \varepsilon_n)} \right)=\ln\left(1 + \frac{\varepsilon_n}{2} + o(\varepsilon_n)\right)
= \frac{\varepsilon_n}{2} - \frac{\varepsilon_n^2}{8} + o(\varepsilon_n^2),
\]
therefore
\begin{align*}\displaystyle
F(\varepsilon_n)\, &=\, \frac{-1 + \frac{\varepsilon_n}{2} - \frac{\varepsilon_n^2}{8} + o(\varepsilon_n^2)}{\varepsilon_n - \frac{\varepsilon_n^2}{2} + o(\varepsilon_n^2)}+\frac{1}{\varepsilon_n}\,  =\,  \frac{o(\varepsilon_n^2)}{\varepsilon_n^2-\frac{\varepsilon_n^3}{2} + o(\varepsilon_n^3)}\to 0, \quad n\to +\infty,
\end{align*}
and so the sequence $(a_n(x))_n$ converges uniformly to $x$ in $[0,1]$.
\\
\end{proof}
We observe that \( \mathscr{L}_n \) are positive linear operators that preserve the logarithmic function \( \ln_\mu(x) \), defined in \eqref{ln}. Indeed,
\begin{align}\label{repr} \displaystyle
    \mathscr{L}_n(\ln_\mu,x)&=\ln_\mu(x)\sum_{k=0}^n \ln_\mu\left(\frac{k}{n}\right)\frac{1}{\ln_\mu\left(\frac{k}{n}\right)}\hspace{1mm}p_{n,k}(a_n(x))\notag
    \\&=\ln_\mu(x)\sum_{k=0}^n p_{n,k}(a_n(x))=\ln_\mu(x),
\end{align} since $\sum_{k=0}^n p_{n,k}(a_n(x))=1$, for every $x \in [0,1].$ Nevertheless we point out that, differently from the exponential operators $\mathscr{G}_n$, \(\mathscr{L}_n\) does not preserve the function $\ln_\mu^2(x)$. 
\\[2mm]
The sequence \((\mathscr{L}_n)_n\) is connected to the classical Bernstein polynomials. In particular, if we denote
\begin{align}\label{fmu}
f_\mu(x):=\frac{f(x)}{\ln_\mu(x)}, \qquad x\in[0,1],
\end{align}
there holds
\begin{align} \displaystyle \label{Rel}
    \mathscr{L}_nf(x) &= \ln_\mu(x)\sum_{k=0}^n f\left(\frac{k}{n} \right)\frac{1}{\ln_\mu\left(\frac{k}{n}\right)}p_{n,k}(a_n(x))\notag\\[0.5em]
    &= \ln_\mu(x) B_n\left( f_\mu,a_n(x)\right),
\end{align}
which is the $\ln_{\mu}-$analogue of \eqref{relexp}. 

\section{Pointwise and uniform convergence}
In this section, we first provide a constructive proof of the pointwise and uniform convergence of the operators \(\mathscr{L}_n\). 

\begin{theorem}\label{th-conv}
If $f:[0,1]\rightarrow \R$ is bounded, then 
\begin{align*} \displaystyle 
\lim_{n \rightarrow +\infty} \mathscr{L}_nf(x) =f(x),
\end{align*}
for every $x\in[0,1]$ where $f$ is continuous. Moreover, 
for every $f \in C([0,1])$ there holds
\begin{align*} \displaystyle 
\lim_{n \rightarrow +\infty} \mathscr{L}_nf(x) =f(x),\hspace{5mm}uniformly\hspace{3mm}for\hspace{3mm}x\in[0,1].
\end{align*}
\end{theorem}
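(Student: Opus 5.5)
We will work from the representation \eqref{Rel}, $\mathscr{L}_nf(x)=\ln_\mu(x)\,B_n(f_\mu,a_n(x))$, and reduce everything to classical Bernstein estimates combined with Lemma \ref{lemma_an}. Since $\ln_\mu$ is continuous and bounded below by $\ln(1+\mu)>0$ on $[0,1]$, the function $f_\mu=f/\ln_\mu$ is bounded whenever $f$ is, and it is continuous exactly at the points where $f$ is. It therefore suffices to show that $B_n(f_\mu,a_n(x))\to f_\mu(x)$, pointwise at continuity points of $f$ and uniformly when $f\in C([0,1])$. We then multiply by $\ln_\mu(x)$, which is bounded by $\ln(2+\mu)$, so uniformity is kept.

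Fix $x$ at which $f_\mu$ is continuous, set $g:=f_\mu$ and $M:=\sup|g|$. Write
\begin{align*}
B_n(g,a_n(x))-g(x)=\sum_{k=0}^n\left[g\left(\tfrac kn\right)-g(x)\right]p_{n,k}(a_n(x)).
\end{align*}
Given $\varepsilon>0$, choose $\delta>0$ such that $|g(t)-g(x)|<\varepsilon$ whenever $|t-x|<\delta$. Split the sum into two parts.

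The indices with $|k/n-x|<\delta$ contribute at most $\varepsilon$, because $\sum_k p_{n,k}=1$.

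For the remaining indices, use $|g(k/n)-g(x)|\le 2M\le 2M(k/n-x)^2/\delta^2$. This gives the bound
\begin{align*}
\frac{2M}{\delta^2}\,B_n\big((\cdot-x)^2,a_n(x)\big).
\end{align*}
The Bernstein moments are $B_n(1)=1$, $B_n(t)=t$ and $B_n(t^2)=t^2+t(1-t)/n$. Evaluating at $y=a_n(x)$ yields
\begin{align*}
B_n\big((\cdot-x)^2,y\big)=(y-x)^2+\frac{y(1-y)}{n}\le \|a_n-\mathrm{id}\|_\infty^2+\frac1{4n}.
\end{align*}
By Lemma \ref{lemma_an} this tends to $0$ uniformly in $x$. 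Hence $\limsup_n|B_n(g,a_n(x))-g(x)|\le\varepsilon$, which proves pointwise convergence.

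For uniform convergence when $f\in C([0,1])$, note that $g$ is uniformly continuous, so the same $\delta$ works for every $x$. Since the moment bound above is already uniform in $x$, the argument gives $\sup_x|B_n(g,a_n(x))-g(x)|\le \varepsilon+o(1)$.

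The only genuinely new ingredient compared with the classical case is that the Bernstein polynomial is evaluated at $a_n(x)$ rather than at $x$. This is handled by centering the second moment at $x$ rather than at $a_n(x)$: the discrepancy then appears only as the term $(a_n(x)-x)^2$, which Lemma \ref{lemma_an} controls uniformly. We expect no further obstacles.

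As an alternative for the uniform part, one could apply Korovkin's theorem with test functions $1,\ln_\mu,\ln_\mu^2$. However, the direct argument above covers both claims at once.
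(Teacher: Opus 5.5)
Your proof is correct, but it is organized differently from the paper's. The paper works with $\mathscr{L}_n$ directly and keeps the weights $\ln_\mu(x)/\ln_\mu(k/n)$ inside the sum. It splits the error into $|\mathscr{L}_n(f,x)-f(x)\mathscr{L}_n(e_0,x)|$ and $|f(x)|\,|\mathscr{L}_n(e_0,x)-1|$. Because $\mathscr{L}_n$ does not reproduce constants, the second term needs its own near/far splitting, using the uniform continuity of $\ln_\mu$. In both pieces the far indices are handled through three ingredients: the inequality $1\le|x-k/n|/\delta$, the first absolute moment bound $\sum_k|y-k/n|\,p_{n,k}(y)<1/(2\sqrt n)$ quoted from the literature, and the triangle inequality with $|a_n(x)-x|\to0$.

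You instead factor out $\ln_\mu(x)$ via $\mathscr{L}_nf=\ln_\mu\,B_n(f_\mu,a_n)$. Since the inner (King-type) operator reproduces constants, the error is a single sum and the $\mathscr{L}_n(e_0)$ term never appears. You then use $1\le (x-k/n)^2/\delta^2$ and the exact second moment $(a_n(x)-x)^2+a_n(x)(1-a_n(x))/n$, which needs only the elementary identities for $B_n(1),B_n(t),B_n(t^2)$. This gives the explicit uniform bound
\[
|\mathscr{L}_nf(x)-f(x)|\le \ln(2+\mu)\Big[\varepsilon+\frac{2\,\|f\|_\infty}{\delta^2\ln(1+\mu)}\Big(\|a_n-\mathrm{id}\|_\infty^2+\frac{1}{4n}\Big)\Big],
\]
where $\delta$ is the continuity parameter of $f_\mu$ rather than of $f$.

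Your route is shorter and treats the pointwise and uniform statements at once. It also anticipates the King-operator reduction that the paper only uses later for its quantitative estimate, where, as in your argument, the modulus of continuity enters through $f_\mu$. The paper's route keeps $f$ and the operator itself in view, at the cost of the extra $\mathscr{L}_n(e_0)$ term and a cited moment inequality.
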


\begin{proof}
We will prove the second part of the theorem, since the first one follows with analogous reasonings. \\
Let us fix $\varepsilon > 0$ and let us denote by $\delta=\delta(\varepsilon)$ the parameter of the uniform continuity of $f$ in $[0,1]$, correspondingly to $\varepsilon$, i.e.,
\begin{align*} \displaystyle
    \left|f(x)-f(y) \right|\leq \varepsilon,
\end{align*}
for every $x,y \in [0,1]$
with $|x-y|\leq\delta$. Then, we have
\begin{align*} \displaystyle
    \left|\mathscr{L}_n(f,x)-f(x)\right|&\le |\mathscr{L}_n(f,x)-f(x)\mathscr{L}_n(e_0,x)|+|f(x)\mathscr{L}_n(e_0,x)-f(x)|\\[2mm]&=:I_1+I_2,
\end{align*} where
\begin{align*} \displaystyle
I_1&=\left|\sum_{k=0}^n f\left(\frac{k}{n}\right) \frac{\ln_\mu(x)}{\ln_\mu\left(\frac{k}{n}\right)}\,p_{n,k}(a_n(x))-f(x)\sum_{k=0}^n \frac{\ln_\mu(x)}{\ln_\mu\left(\frac{k}{n}\right)}\,p_{n,k}(a_n(x))\right|\end{align*}\begin{align*}
&\le \sum_{k=0}^n \Big|f\left(\frac{k}{n}\right)-f(x)\Big|\frac{\ln_\mu(x)}{\ln_\mu\left(\frac{k}{n}\right)}\,p_{n,k}(a_n(x))\\[2mm]
&\le \Big\{\displaystyle\sum_{\left|x-\frac{k}{n}\right|\le \delta}+\displaystyle\sum_{\left|x-\frac{k}{n}\right|>\delta}\Big\}
\Big|f\left(\frac{k}{n}\right)-f(x)\Big|\frac{\ln_\mu(x)}{\ln_\mu\left(\frac{k}{n}\right)}\,p_{n,k}(a_n(x))
\\[2mm]&=:I_{1,1} +I_{1,2}.
\end{align*}
We note that 
\begin{align*} \displaystyle
    I_{1,1}\,\,\le\,\,\frac{\varepsilon}{\ln(1+\mu)}\ln_\mu(x)\sum_{\left|x-\frac{k}{n}\right|\le \delta}\, p_{n,k}(a_n(x)) \le \frac{\varepsilon}{\ln(1+\mu)}\,\ln(2+\mu).
\end{align*}
Moreover, we also have that
\begin{align*} 
    I_{1,2}\,\,&\le\,\,\frac{2\Vert f\Vert_{\infty}\ln_\mu(x)}{\ln(1+\mu)}\sum_{\left|x-\frac{k}{n}\right|>\delta} p_{n,k}(a_n(x))\\
    &\le\,\,\frac{2\Vert f\Vert_{\infty}\ln(2+\mu)}{\ln(1+\mu)}\sum_{\left|x-\frac{k}{n}\right|>\delta} p_{n,k}(a_n(x))\frac{\left|x-\frac{k}{n}\right|}{\left|x-\frac{k}{n}\right|}\\
    &\le\,\,\frac{2\Vert f\Vert_{\infty}\ln(2+\mu)}{\delta\ln(1+\mu)}\sum_{\left|x-\frac{k}{n}\right|>\delta}\Big|x-\frac{k}{n}\Big|\,\,p_{n,k}(a_n(x)),
\end{align*}
where
\begin{align*} 
    &\sum_{\left|x-\frac{k}{n}\right|>\delta} \Big|x-\frac{k}{n}\Big|\,\,p_{n,k}(a_n(x))\\
    &\hspace{15mm}\le\sum_{\left|x-\frac{k}{n}\right|>\delta} |x-a_n(x)|\,\,p_{n,k}(a_n(x))+\sum_{\left|x-\frac{k}{n}\right|>\delta} \Big|a_n(x)-\frac{k}{n}\Big|\,\,p_{n,k}(a_n(x)).
\end{align*}
Now, using the classical estimate for the first absolute moment of the Bernstein polynomials, i.e., 
\begin{align*}
\sum_{k=0}^n \left|y-\frac{k}{n}\right|p_{n,k}(y)<\frac{1}{2\sqrt{n}},
\end{align*}
$y\in [0,1]$ (see \cite{Angeloni2024}), it follows that
\begin{align*} \displaystyle
    \sum_{\left|x-\frac{k}{n}\right|>\delta} \left|a_n(x)-\frac{k}{n}\right|p_{n,k}(a_n(x))<\frac{1}{2\sqrt{n}}<\varepsilon\delta,
\end{align*} for $n$ sufficiently large. 
By Lemma \ref{lemma_an}, we have that, for $n$ large enough, $|a_n(x)-x|<\varepsilon\delta$, for every $x \in [0,1]$. Therefore,
\begin{align*}\displaystyle
    I_1&\le I_{1,1} +I_{1,2}\le \frac{\ln(2+\mu)}{\ln(1+\mu)}\varepsilon+\frac{4\Vert f\Vert_\infty\ln(2+\mu)}{\delta\ln(1+\mu)}\varepsilon\delta\\
    &=\varepsilon\ \frac{\ln(2+\mu)}{\ln(1+\mu)}\ (1+4\Vert f\Vert_\infty),
\end{align*} for $n$ sufficiently large. On the other side,
\begin{align*}\displaystyle
I_2&= |f(x)\mathscr{L}_n(e_0,x)-f(x)|\le \Vert f\Vert_\infty|\mathscr{L}_n(e_0,x)-e_0(x)|
\\[1.5mm]&=\Vert f\Vert_\infty\left|\sum_{k=0}^n\frac{\ln_\mu(x)}{\ln_\mu\left(\frac{k}{n}\right)}p_{n,k}(a_n(x))-1\right|
\\[1.5mm]&=\Vert f\Vert_\infty\left|\sum_{k=0}^n\left[\frac{\ln_\mu(x)}{\ln_\mu\left(\frac{k}{n}\right)}-\frac{\ln_\mu\left(\frac{k}{n}\right)}{\ln_\mu\left(\frac{k}{n}\right)}\right]p_{n,k}(a_n(x))\right|\\[1.5mm]
    &\le\Vert f\Vert_\infty\sum_{k=0}^n\frac{|\ln_\mu(x)-\ln_\mu\left(\frac{k}{n}\right)|}{\ln_\mu\left(\frac{k}{n}\right)}\,\,p_{n,k}(a_n(x))\\[1.5mm]
    &\le\Vert f\Vert_\infty\Big\{\sum_{\left|x-\frac{k}{n}\right|\le \delta}+\sum_{\left|x-\frac{k}{n}\right|>\delta}\Big\}\frac{|\ln_\mu(x)-\ln_\mu\left(\frac{k}{n}\right)|}{\ln(1+\mu)}\,\,p_{n,k}(a_n(x))\\[1.5mm]
    &=:I_{2,1}+I_{2,2},
\end{align*}
when $\delta$ is the parameter for the uniform continuity of $\ln_\mu(x)$ in $[0,1]$, correspondingly to $\varepsilon$.
By similar reasonings as before, we have that
\begin{align*}\displaystyle
    I_{2,1}\le\Vert f\Vert_\infty\frac{\varepsilon}{\ln(1+\mu)}\sum_{\left|x-\frac{k}{n}\right|\le \delta}\, p_{n,k}(a_n(x))\le\varepsilon\Vert f\Vert_\infty\frac{1}{\ln(1+\mu)}
\end{align*}
and
\begin{align*}
    I_{2,2}&\le \Vert f\Vert_\infty\frac{2\ln(2+\mu)}{\delta\ln(1+\mu)}\sum_{\left|x-\frac{k}{n}\right|>\delta}\left|x-\frac{k}{n}\right|p_{n,k}(a_n(x))\\[1mm]&\le\Vert f\Vert_\infty \frac{2\ln(2+\mu)}{\delta\ln(1+\mu)}\,2\delta\varepsilon
    =\varepsilon\Vert f\Vert_\infty\frac{4\ln(2+\mu)}{\ln(1+\mu)},
\end{align*} whence
\begin{align*}\displaystyle
    I_2\le I_{2,1}+I_{2,2}\le \varepsilon\Vert f\Vert_\infty\left(\frac{1}{\ln(1+\mu)}+\frac{4\ln(2+\mu)}{\ln(1+\mu)}\right).
\end{align*}
In conclusion, we obtain that, for $n$ sufficiently large, and for every $x\in[0,1]$,
\begin{align*}\displaystyle
     \left|\mathscr{L}_n(f,x)-f(x)\right|\le\varepsilon\ \frac{\ln(2+\mu)}{\ln(1+\mu)}\ \left[1+\Vert f\Vert_\infty\left(8+\frac{1}{\ln(2+\mu)}\right)\right], 
\end{align*}
from which the thesis immediately follows.\\
\end{proof}

We point out that the second part of the previous convergence result can be alternatively proved by means of the classical Korovkin theorem, which provides a sufficient condition to establish the uniform convergence of sequences of positive linear operators. Specifically, if $(T_n)_{n \in\N}$ is a sequence of positive linear operators on $C([0,1])$ such that $T_n(e_i) \to e_i$ uniformly on $[0,1]$ as $n \to +\infty$, with $e_i(x):=x^i$, $i=0,1,2$, then $T_n(f) \to f$ uniformly for every $f \in C([0,1])$.

This idea can be extended by recalling the well-known concept of Korovkin subset (\cite{AltoCam,Altomare2010}). For details and insights on Korovkin-type approximation theory, see, e.g. \cite{Bohman,Korovkin1953,AltoCam, Altomare2010} and the references therein. We can prove the following.

\begin{proposition}\label{Prop}
Given $\lambda_1,\lambda_2 \in \mathbb{R}$, $0<\lambda_1<\lambda_2,$ then
$\{1,\ln_{\mu}^{\lambda_1},\ln_{\mu}^{\lambda_2}\}$ is a Korovkin subset of $C([0,1])$.
\end{proposition}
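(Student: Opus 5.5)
The plan is to reduce the statement to the classical Korovkin theorem through a change of variable, or equivalently to invoke the standard criterion for Korovkin subsets of the type ``$\{1,\varphi,\varphi^2\}$'' with $\varphi$ injective, from \cite{AltoCam}. Set $t=\ln_\mu(x)$. Since $\ln_\mu$ is a continuous, strictly increasing bijection of $[0,1]$ onto $J:=[\ln(1+\mu),\ln(2+\mu)]$, and $\ln(1+\mu)>0$ because $\mu>0$, the map $g\mapsto g\circ\ln_\mu^{-1}$ is an isometric lattice isomorphism of $C([0,1])$ onto $C(J)$. Positive linear operators $T_n$ on $C([0,1])$ correspond to positive linear operators $S_n g:=(T_n(g\circ\ln_\mu))\circ\ln_\mu^{-1}$ on $C(J)$. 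It therefore suffices to show that $\{1,t^{\lambda_1},t^{\lambda_2}\}$ is a Korovkin subset of $C(J)$, where $J\subset(0,+\infty)$ is compact.

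For this I will use the classical peak-function criterion: a set $M\subset C(K)$ containing $1$ is a Korovkin subset if, for every $s\in K$, there is $h_s$ in the linear span of $M$ with $h_s(s)=0$ and $h_s>0$ on $K\setminus\{s\}$. The corresponding direct argument is short. Given $f$ and $\varepsilon>0$, uniform continuity and compactness give $|f(t)-f(s)|\le\varepsilon+C_\varepsilon h_s(t)$ with $C_\varepsilon$ uniform in $s$. Applying $S_n$, using $S_n(h_s)(s)\to h_s(s)=0$ uniformly in $s$ (the coefficients of $h_s$ are bounded in $s$), and using $S_n1\to1$, one obtains $\|S_nf-f\|\to0$.

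The main step is to construct $h_s$. With $\alpha=\lambda_1$, $\beta=\lambda_2$ and $0<\alpha<\beta$, I take
\[
h_s(t)=\frac{t^{\beta}}{\beta s^{\beta}}-\frac{t^{\alpha}}{\alpha s^{\alpha}}-\Big(\frac1\beta-\frac1\alpha\Big),\qquad t\in J.
\]
Clearly $h_s(s)=0$. Writing $u=t/s>0$, we get $h_s=\psi(u):=u^\beta/\beta-u^\alpha/\alpha-1/\beta+1/\alpha$. Its derivative is $\psi'(u)=u^{\alpha-1}(u^{\beta-\alpha}-1)$, which is negative for $u<1$ and positive for $u>1$. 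Hence $\psi$ has a strict global minimum $0$ at $u=1$, so $h_s>0$ for $t\ne s$. The coefficients are continuous in $s\in J$, which is where $J\subset(0,\infty)$, that is $\mu>0$, is used, so they are uniformly bounded and the uniformity above holds.

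I expect the only delicate point to be the uniformity in $s$ of the constant $C_\varepsilon$. This needs $\inf_{s}\inf_{|t-s|\ge\delta}h_s(t)>0$, which follows from the joint continuity and positivity of $(s,t)\mapsto h_s(t)$ on the compact set $\{(s,t)\in J^2:|t-s|\ge\delta\}$. Transferring back via $\ln_\mu$ then gives the claim.
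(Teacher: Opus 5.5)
Your proof is correct and is essentially the paper's. Under $t=\ln_\mu(x)$, $s=\ln_\mu(x_0)$, your peak function $h_s$ is exactly $\frac{\lambda_2-\lambda_1}{\lambda_1\lambda_2}$ times the paper's auxiliary function $h$, and your sign analysis of $\psi'(u)=u^{\alpha-1}(u^{\beta-\alpha}-1)$ is the paper's derivative computation in the variable $u=\ln_\mu(x)/\ln_\mu(x_0)$. The only differences are cosmetic: you transfer the problem to $C(J)$ and prove the peak-function criterion by hand, including the uniformity in $s$, whereas the paper stays on $[0,1]$ and cites Proposition 6.4 of \cite{Altomare2010}.
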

\begin{proof} We consider $0<\lambda_1<\lambda_2$, $x_0 \in[0,1]$ and we define
\begin{align*}\displaystyle
    h(x):=1+\frac{\lambda_2}{\lambda_1-\lambda_2}\frac{\ln_\mu^{\lambda_1}(x)}{\ln_\mu^{\lambda_1}(x_0)}+\frac{\lambda_1}{\lambda_2-\lambda_1}\frac{\ln_\mu^{\lambda_2}(x)}{\ln_\mu^{\lambda_2}(x_0)},
\end{align*} with $h(x_0)=1+\frac{\lambda_2-\lambda_1}{\lambda_1-\lambda_2}=0$. Let now $x \neq x_0$ be fixed: then we have
\begin{align*}\displaystyle
    h'(x)&=\frac{\lambda_1\lambda_2}{\lambda_1-\lambda_2}\frac{1}{\ln_\mu^{\lambda_1}(x_0)}\frac{\ln_\mu^{\lambda_1-1}(x)}{(1+\mu+x)}+\frac{\lambda_1\lambda_2}{\lambda_2-\lambda_1}\frac{1}{\ln_\mu^{\lambda_2}(x_0)}\frac{\ln_\mu^{\lambda_2-1}(x)}{(1+\mu+x)}\\[0.5em]
    &\hspace{-5mm}=\frac{\lambda_1\lambda_2}{\lambda_2-\lambda_1}\frac{\ln_\mu^{-1}(x)}{(1+\mu+x)}\frac{\ln_\mu^{\lambda_1}(x)}{\ln_\mu^{\lambda_1}(x_0)}\left[-1+\frac{\ln_\mu^{\lambda_2-\lambda_1}(x)}{\ln_\mu^{\lambda_2-\lambda_1}(x_0)}\right].
\end{align*} The first derivative of $h(x)$ is equal to zero if and only if $x=x_0$ and is positive if and only if 
$$
\frac{\ln_\mu^{\lambda_2-\lambda_1}(x)}{\ln_\mu^{\lambda_2-\lambda_1}(x_0)}>1, \quad \text{i.e.},\quad x>x_0.
$$
 Then, $x_0$ is the point of absolute minimum of $h(x)$ and $h(x)>h(x_0)=0$, $\forall x\neq x_0$. 

Therefore, by Proposition 6.4 of \cite{Altomare2010}, we conclude that $\{1,\ln_{\mu}^{\lambda_1},\ln_{\mu}^{\lambda_2}\}$ is a Korovkin subset of $C([0,1])$.
\\\end{proof} 

In the proof of Theorem \ref{th-conv}, we saw that $(\mathscr{L}_n \ln^2_{\mu})_n$ converges uniformly to $\ln^2_{\mu}$ and, in a similar way it can be proved that $(\mathscr{L}_n 1)_n$ converges uniformly to the constant function $1$ on $[0,1]$. Therefore, taking into account of (\ref{repr}) and the fact that, by Proposition 6.4 of \cite{Altomare2010}, $\{1,\ln_{\mu},\ln^2_{\mu}\}$ is a Korovkin subset of $C([0,1])$, we can conclude that $(\mathscr{L}_n f)_n$ converges uniformly to $f$ on $[0,1]$, whenever $f\in C([0,1])$.

\section{Further approximation properties} 
In this section, we explore the approximation properties of the operators \(\mathscr{L}_n\), using their connection with the classical King-type operators (\cite{king,gonska2009king,finta2023}). In particular, we derive a quantitative estimate for the approximation error and we conclude with a Voronovskaja-type asymptotic result.
\\[0.5em]The King operators (\cite{king}) are defined as
\begin{align}\displaystyle\label{Vnf}
    \mathcal{V}_n f(x)= \mathcal{V}_n (f,x) := \sum_{k=0}^n f\left(\frac{k}{n}\right) \binom{n}{k} r_n^k(x) [1 - r_n(x)]^{n-k}, \quad x \in [0,1],
\end{align}
where $r_n(x)$ is continuous on $[0,1]$ such that $0 \le r_n(x) \le 1$. In~\cite{king}, J. King proved that
\begin{align}\displaystyle
    &\mathcal{V}_n(e_0, x) = 1, \label{Vne0}\\[0.5em]
    &\mathcal{V}_n(e_1, x) = r_n(x), \label{Vne1}\\[0.3em]
    &\mathcal{V}_n(e_2, x) = \frac{r_n(x)}{n} + \frac{n-1}{n} r_n^2(x). \label{Vne2}
\end{align}
Furthermore, he showed that $\mathcal{V}_n (f,x) \to f(x)$ as $n \to +\infty$, for every $f \in C([0,1])$ and $x \in [0,1]$, if and only if $\lim\limits_{n \to +\infty} r_n(x) = x$. 

For a function $f\in C([0,1])$, we recall that the modulus of continuity of $f$ is defined by
\begin{align*}
\omega(f,\delta) := \sup_{\substack{|x - y| \le \delta \\ x,y \in [0,1]}} |f(x) - f(y)|, \quad \delta > 0.
\end{align*} 
In \cite{Devore1972} it is provided the following estimate for every positive linear operator $L_n$ on $C([0,1])$ and for every $f \in C([0,1])$:
\begin{align} \label{K1}
    |L_n (f,x) - f(x)| \le \, &\omega(f, \delta) \left[L_n (e_0,x) + \frac{1}{\delta} \sqrt{L_n (e_0,x)} \alpha_n(x)\right] \notag\\&+|f(x)| \cdot |L_n(e_0,x) - e_0(x)|,
\end{align}
where $\alpha_n^2(x) = L_n (\varphi_t,x)$, with $\varphi_t(x) = (t - x)^2$.

Moreover, as remarked in \cite{king}, it is easy to see that
\begin{align}\displaystyle\label{alpha^2_n}
\alpha^2_n(x)=L_n(e_2, x)-2x L_n(e_1, x)+x^2 L_n(e_0, x).
\end{align}
The new operators, introduced in (\ref{GnNew}), can be rewritten as
\begin{align}\label{LK}
    \mathscr{L}_nf(x)=\ln_\mu(x)\mathcal{V}_n(f_\mu,x),
\end{align} with $r_n(x) \equiv a_n(x).$

By means of this relation, it is possible to provide a quantitative estimate of the error of approximation for the operators $\mathscr{L}_n$. In this regard, we state the following theorem.
\begin{theorem}
    Let $f \in C([0,1])$ be fixed. Then
\begin{align} \displaystyle \label{Lnf-f}
\Vert \mathscr{L}_n f - f \Vert_\infty \leq \ln(2+\mu) \,\, \omega\left( f_\mu, \frac{1}{\sqrt{n}} \right)\left(2+\sqrt{n}\gamma_n\right),
\end{align} for $n \in \mathbb{N}$, where $\gamma_n:= \max\limits_{x\in [0,1]} (a_n(x)-x).$
\end{theorem}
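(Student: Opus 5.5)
The plan is to go through the factorization (\ref{LK}), $\mathscr{L}_nf(x)=\ln_\mu(x)\,\mathcal{V}_n(f_\mu,x)$ with $r_n\equiv a_n$, rather than applying (\ref{K1}) directly to $\mathscr{L}_n$. Since $f=\ln_\mu\, f_\mu$, we can write
\begin{align*}
\mathscr{L}_nf(x)-f(x)=\ln_\mu(x)\bigl[\mathcal{V}_n(f_\mu,x)-f_\mu(x)\bigr].
\end{align*}
This gives
\begin{align*}
|\mathscr{L}_nf(x)-f(x)|\le \ln(2+\mu)\,|\mathcal{V}_n(f_\mu,x)-f_\mu(x)|.
\end{align*}
So it suffices to estimate the King operator applied to the continuous function $f_\mu$. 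This also explains why the modulus of continuity in (\ref{Lnf-f}) is that of $f_\mu$ and not of $f$.

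Next, apply (\ref{K1}) with $L_n=\mathcal{V}_n$ and the function $f_\mu$. By (\ref{Vne0}) we have $\mathcal{V}_n(e_0,x)=1$, so the last term in (\ref{K1}) vanishes. We are left with
\begin{align*}
|\mathcal{V}_n(f_\mu,x)-f_\mu(x)|\le \omega(f_\mu,\delta)\bigl[1+\delta^{-1}\alpha_n(x)\bigr].
\end{align*}
By (\ref{alpha^2_n}), (\ref{Vne1}) and (\ref{Vne2}),
\begin{align*}
\alpha_n^2(x)=\frac{a_n(x)(1-a_n(x))}{n}+\bigl(a_n(x)-x\bigr)^2 .
\end{align*}
Using $0\le a_n\le 1$, so that $a_n(1-a_n)\le 1/4$, together with $0\le a_n(x)-x\le\gamma_n$ (recall $a_n(x)\ge x$), we get
\begin{align*}
\alpha_n(x)\le \sqrt{\tfrac{1}{4n}+\gamma_n^2}\le \tfrac{1}{2\sqrt n}+\gamma_n .
\end{align*}

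Finally, choose $\delta=1/\sqrt n$. The bracket becomes $1+\sqrt n\bigl(\tfrac{1}{2\sqrt n}+\gamma_n\bigr)=\tfrac32+\sqrt n\,\gamma_n\le 2+\sqrt n\,\gamma_n$. Multiplying by $\ln(2+\mu)$ and taking the supremum over $x\in[0,1]$ gives (\ref{Lnf-f}).

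The only step needing care is the computation of $\alpha_n^2$. One has to expand $e_2-2xe_1+x^2e_0$ with King's moments so that it splits cleanly into a variance part $a_n(1-a_n)/n$ plus a bias part $(a_n-x)^2$. After that, everything is a direct substitution.
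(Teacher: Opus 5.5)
Your proposal is correct and follows essentially the same route as the paper: factor through the King operator $\mathcal{V}_n$ with $r_n\equiv a_n$, apply DeVore's estimate (\ref{K1}) using $\mathcal{V}_n(e_0,x)=1$, split $\alpha_n^2(x)$ into $a_n(x)(1-a_n(x))/n+(a_n(x)-x)^2$, and take $\delta=1/\sqrt n$. The only difference is in bounding the variance term: you use $a_n(1-a_n)\le 1/4$, which gives the slightly sharper constant $3/2$, whereas the paper uses the cruder bound $\Vert a_n\Vert_\infty\Vert 1-a_n\Vert_\infty\le 1$ together with $\sqrt{a+b+c}\le\sqrt a+\sqrt b+\sqrt c$ to reach $2$.
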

\begin{proof} 
By (\ref{LK}), we have that
\begin{align*}\displaystyle
    S:=&\vert \mathscr{L}_nf(x)-f(x)\vert=\left\vert \ln_\mu(x)\mathcal{V}_n(f_\mu,x)-\ln_\mu(x)f_\mu(x)\right\vert\\[0.5em]
    \le&\ln_\mu(x)\,\left\vert \mathcal{V}_n(f_\mu,x)-f_\mu(x)\right\vert.
\end{align*} 
Applying the estimate (\ref{K1}) to $V_n$ and $f_\mu$, we obtain
\begin{align*}
    S &\le \ln_\mu(x)\Big\{ \omega(f_\mu,\delta) \left[\mathcal{V}_n(e_0,x)+\delta^{-1}\sqrt{\mathcal{V}_n(e_0,x)}\,\alpha_n(x)\right] \Big.\\[0.5em]
    &\quad\Big. + \left| f_\mu(x) \right| \left| \mathcal{V}_n(e_0,x) - e_0(x) \right| \Big\}\\[0.5em]
    &= \ln_\mu(x)\,\omega(f_\mu,\delta)\left[1+\delta^{-1}\alpha_n(x)\right],
\end{align*}
for $\delta >0$. Since, by (\ref{alpha^2_n}), and (\ref{Vne0}), (\ref{Vne1}), (\ref{Vne2}),
\begin{align*}\displaystyle    \alpha_n^2(x)&=\mathcal{V}_n(e_2,x)-2x\mathcal{V}_n(e_1,x)+x^2\mathcal{V}_n(e_0,x)\\[0.5em]
    &=\frac{a_n(x)}{n}+\frac{n-1}{n}a_n^2(x)-2xa_n(x)+x^2,
\end{align*}
we obtain the estimate
\begin{align*}\displaystyle
    \delta^{-1}\alpha_n(x)&=\frac{1}{\delta}\left\{\frac{a_n(x)}{n}+\frac{n-1}{n}a_n^2(x)-2xa_n(x)+x^2\right\}^{\frac{1}{2}}\\[0.5em]
    &=\frac{1}{\delta}\left\{\frac{a_n(x)}{n}-\frac{a_n^2(x)}{n}+(a_n(x)-x)^2\right\}^{\frac{1}{2}}\\[0.5em]
    &=\left\{\frac{a_n(x)-a_n^2(x)}{n\delta^2}+\frac{1}{\delta^2}\left({a_n(x)-x}\right)^2\right\}^{\frac{1}{2}}\\[0.5em]
    &\le\left\{\frac{\Vert a_n\Vert_\infty \max\limits_{x\in[0,1]} \vert 1-a_n(x)\vert}{n\delta^2}+\frac{1}{\delta^2}\max\limits_{x \in[0,1]}( a_n(x)-x)^2\right\}^{\frac{1}{2}}.
\end{align*}
Taking $\delta=\frac{1}{\sqrt{n}}$, we get
\begin{align*}
    \sqrt{n}\,\alpha_n(x)\le\left\{\Vert a_n\Vert_\infty\max_{x \in[0,1]} \vert 1-a_n(x)\vert+n\max_{x \in[0,1]} (a_n(x)-x)^2 \right\}^{\frac{1}{2}}.
\end{align*}
Denoting \( \gamma_n := \max\limits_{x\in [0,1]} (a_n(x) - x) \), so that
\begin{align*}
\gamma_n^2 = \max_{x\in [0,1]} (a_n(x) - x)^2,
\end{align*} 
we conclude that
\begin{align*}\displaystyle
    S \leq \ln_\mu(x) \,\, \omega\left( f_\mu, \frac{1}{\sqrt{n}} \right)\left\{1+\Vert a_n \Vert_\infty \Vert 1-a_n\Vert_\infty+n\gamma_n^2\right\}^{\frac{1}{2}}.
\end{align*}
Since obviously $\sqrt{a+b+c}\le\sqrt{a}+\sqrt{b}+\sqrt{c}$, for $a,b,c\ge 0$,
\begin{align*}\displaystyle
    S &\leq \ln_\mu(x) \,\, \omega\left( f_\mu, \frac{1}{\sqrt{n}} \right)\left(1+\sqrt{\Vert a_n \Vert_\infty \Vert 1-a_n\Vert_\infty}+\sqrt{n}\gamma_n\right)\\
    &\leq \ln_\mu(x) \,\, \omega\left( f_\mu, \frac{1}{\sqrt{n}} \right)\left(2+\sqrt{n}\gamma_n\right).
\end{align*} 
Therefore 
\begin{align*}
    \Vert \mathscr{L}_n f-f\Vert_\infty \leq \ln(2+\mu)\,\,\omega\left(f_\mu,\frac{1}{\sqrt{n}} \right)(2+\sqrt{n}\gamma_n),
\end{align*} and the proof is complete.\\
\end{proof}

\begin{remark}
\normalfont
For the sake of completeness, we analyze the asymptotic behavior of the quantity \(\sqrt{n}\gamma_n\), where
$\gamma_n = \displaystyle\max_{x\in [0,1]} (a_n(x) - x)$.
To this end, we recall the notation \eqref{varn}, i.e.,
\[ \varepsilon_n := \frac{1}{n(1+\mu)}, \] 
so that 
\[
a_n(x) = \frac{\ln(1 + x \varepsilon_n)}{\ln(1 + \varepsilon_n)}.
\]
Applying the Taylor expansion of the logarithmic function and of the geometric series we get, for $n$ sufficiently large,
\begin{align*} 
a_n(x) &= \frac{x\varepsilon_n - \frac{x^2 \varepsilon_n^2}{2} + o(\varepsilon_n^2)}{\varepsilon_n - \frac{\varepsilon_n^2}{2} + o(\varepsilon_n^2)}
= \frac{x - \frac{x^2 \varepsilon_n}{2} + o(\varepsilon_n)}{1 - \frac{\varepsilon_n}{2} + o(\varepsilon_n)}
\\&= \left( x - \frac{x^2 \varepsilon_n}{2} + o(\varepsilon_n) \right) \left(1 + \frac{\varepsilon_n}{2} + o(\varepsilon_n) \right) 
\\&= x + \varepsilon_n \left( \frac{x-x^2}{2}\right)-\frac{x^2\varepsilon_n^2}{4} + o(\varepsilon_n^2).
\end{align*}
Therefore
\begin{align}\label{lim_n(an-x)}
\lim_{n \to +\infty} n(a_n(x) - x) = \frac{x - x^2}{2(1+\mu)}.
\end{align} 
Since \(x \in [0,1]\), the function \(x - x^2\) attains its maximum at \(x = \frac{1}{2}\), then
\[
\max_{x \in [0,1]} (x - x^2) = \frac{1}{4}, 
\]
and so
\begin{align*}
    \lim_{n\to +\infty}n\gamma_n=\frac{1}{8(1+\mu)}.
\end{align*}
Therefore, we conclude that
\begin{align}\label{Ogamma}
    \sqrt{n}\gamma_n=\mathcal{O}\left(\frac{1}{\sqrt{n}}\right),\quad\text{as\hspace{2mm}}n\to+\infty.
\end{align}
This shows that the term \(\sqrt{n} \gamma_n\) is arbitrarily small, as $n \to +\infty$, thereby validating the effectiveness of the convergence rate provided in \eqref{Lnf-f}. 
\end{remark}
Now, we establish a Voronovskaja-type asymptotic formula for the operators $\mathscr{L}_n$.
\begin{theorem}\label{Vor} 
  If $f \in C^2([0,1])$, then
\begin{align*} \displaystyle 
\lim_{n \to +\infty} n\left[ \mathscr{L}_n(f,x) - f(x) \right] 
= \ln_\mu(x)\,\frac{x-x^2}{2}\left[\frac{f_\mu'(x)}{(1+\mu)}+ f_\mu''(x)\right]
\end{align*}  for $x\in[0,1]$.
\end{theorem}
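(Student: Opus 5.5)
Since $f\in C^2([0,1])$ and $\ln_\mu$ is smooth and bounded below by $\ln(1+\mu)>0$, the quotient $f_\mu=f/\ln_\mu$ also belongs to $C^2([0,1])$. By \eqref{LK},
\[
n[\mathscr{L}_n(f,x)-f(x)]=\ln_\mu(x)\, n[\mathcal{V}_n(f_\mu,x)-f_\mu(x)],
\]
so it suffices to show
\[
\lim_{n\to+\infty} n[\mathcal{V}_n(g,x)-g(x)]=\frac{x-x^2}{2}\left[\frac{g'(x)}{1+\mu}+g''(x)\right]
\]
for every $g\in C^2([0,1])$, with $r_n=a_n$.

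Fix $x$ and use Taylor's formula with Peano remainder:
\[
g(t)=g(x)+g'(x)(t-x)+\tfrac12 g''(x)(t-x)^2+h(t,x)(t-x)^2,
\]
where $h$ is bounded and $h(t,x)\to 0$ as $t\to x$. Apply $\mathcal{V}_n$ (positive, linear, $\mathcal{V}_n e_0=1$ by \eqref{Vne0}) and multiply by $n$. This gives three terms.

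\emph{First moment.} By \eqref{Vne1}, $n\mathcal{V}_n(t-x,x)=n(a_n(x)-x)$. By \eqref{lim_n(an-x)} this tends to $\frac{x-x^2}{2(1+\mu)}$, which yields the $g'$ term.

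\emph{Second moment.} By the computation of $\alpha_n^2$ in the previous theorem,
\[
n\mathcal{V}_n((t-x)^2,x)=a_n(x)(1-a_n(x))+n(a_n(x)-x)^2.
\]
By Lemma \ref{lemma_an} the first part tends to $x-x^2$. Since $n(a_n(x)-x)$ is bounded, $n(a_n-x)^2=O(1/n)\to0$. This yields $\frac{x-x^2}{2}g''(x)$.

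\emph{Remainder.} The main obstacle is showing that $n\,\mathcal{V}_n(h(\cdot,x)(\cdot-x)^2,x)\to0$. Given $\varepsilon>0$, choose $\delta$ with $|h(t,x)|<\varepsilon$ for $|t-x|<\delta$, and let $M=\sup|h|$. Then
\[
|h(t,x)|(t-x)^2\le\varepsilon(t-x)^2+M\delta^{-2}(t-x)^4 .
\]
The first part contributes at most $\varepsilon\, n\mathcal{V}_n((t-x)^2,x)=O(\varepsilon)$. For the second, we need $n\mathcal{V}_n((t-x)^4,x)\to0$. To get this, write $t-x=(t-a_n)+(a_n-x)$ and use $(u+v)^4\le 8(u^4+v^4)$. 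The classical Bernstein fourth central moment gives
\[
\sum_k (k/n-y)^4p_{n,k}(y)\le \frac{C}{n^2},
\]
uniformly in $y$, and evaluating at $y=a_n(x)$ gives $O(1/n^2)$. The other part is $(a_n-x)^4=O(1/n^4)$ by \eqref{lim_n(an-x)}, or uniformly via $n\gamma_n$ bounded. Hence $n\mathcal{V}_n((t-x)^4,x)=O(1/n)$. Letting $n\to\infty$ and then $\varepsilon\to0$ kills the remainder.

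Combining the three limits and multiplying by $\ln_\mu(x)$ gives the stated formula. At the endpoints $x=0,1$ both sides vanish, since $a_n(0)=0$ and $a_n(1)=1$ make $\mathscr{L}_n$ interpolate there, so the argument is consistent there as well.
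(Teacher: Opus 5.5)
Your proof is correct and follows essentially the same route as the paper: a Taylor expansion of $f_\mu$ with Peano remainder, King's moment identities giving $n(a_n(x)-x)$ and $a_n(x)-a_n^2(x)+n(a_n(x)-x)^2$, and the limit \eqref{lim_n(an-x)}. The only difference is in the remainder term. You dominate the far region by $M\delta^{-2}(t-x)^4$ and use the fourth central moment of the Bernstein polynomials, whereas the paper splits the sum around $a_n(x)$ and uses the DeVore--Lorentz tail estimate $\sum_{|k/n-a_n(x)|>\tilde\delta/2}p_{n,k}(a_n(x))\le Cn^{-2}$; the fourth-moment bound implies that tail estimate via Markov's inequality.
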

\begin{proof}
Using the notation (\ref{fmu}), we can rewrite the operator as
\[
\mathscr{L}_n(f,x) = \ln_{\mu}(x) \sum_{k=0}^n f_\mu\left( \frac{k}{n} \right) p_{n,k}(a_n(x)).
\]
If $f \in C^2[0,1]$, then $f_\mu \in C^2([0,1])$ as well. We now fix a point \( x \in [0,1] \) and apply the second-order Taylor formula to \( f_\mu \) with Peano remainder. In particular, we obtain
\[
f_\mu\left( \frac{k}{n} \right) = f_\mu(x) + f_\mu'(x) \left( \frac{k}{n} - x \right) + \left( \frac{k}{n} - x \right)^2 \left[\frac{1}{2} f_\mu''(x) + \psi\left( \frac{k}{n} - x \right)\right],
\]
where \( \psi(y) \) is a bounded function for every \( y\) that tends to zero as \( y \to 0^+ \). Replacing this expansion into the operator, we get
\begin{align*}
\mathscr{L}_n(f,x) &= \ln_\mu(x) \sum_{k=0}^n \left[ f_\mu(x) + f_\mu'(x) \left( \frac{k}{n} - x \right) + \frac{1}{2} f_\mu''(x) \left( \frac{k}{n} - x \right)^2 \right. \\
&\qquad \left. + \left( \frac{k}{n} - x \right)^2 \psi\left( \frac{k}{n} - x \right) \right] p_{n,k}(a_n(x))\end{align*}\begin{align*}
\hspace{15mm}&=\,\ln_\mu(x) \left[ f_\mu(x) \sum_{k=0}^n p_{n,k}(a_n(x)) + f_\mu'(x) \sum_{k=0}^n \left( \frac{k}{n} - x \right) p_{n,k}(a_n(x)) \right. \\
&\qquad \left. + \frac{1}{2} f_\mu''(x) \sum_{k=0}^n \left( \frac{k}{n} - x \right)^2 p_{n,k}(a_n(x)) \right.\\
&\qquad\left.+ \sum_{k=0}^n \left( \frac{k}{n} - x \right)^2\, \psi\left( \frac{k}{n} - x \right) p_{n,k}(a_n(x)) \right].
\end{align*} Denoting the last term, namely the remainder, by \( R_n(x)\) for brevity, the operator becomes 
\begin{align*}
\mathscr{L}_n(f,x)
&=f(x)+\ln_\mu(x)\left[ f_\mu'(x) \sum_{k=0}^n \left( \frac{k}{n} - x \right) p_{n,k}(a_n(x)) \right. \notag\\
& \left.\quad +\frac{1}{2} f_\mu''(x) \sum_{k=0}^n \left( \frac{k}{n} - x \right)^2 p_{n,k}(a_n(x)) + R_n(x)\right],
\end{align*}
so that 
\begin{align}\label{Ln-f}
\mathscr{L}_n(f,x)-f(x)&=\ln_\mu(x)\left[ f_\mu'(x) \sum_{k=0}^n \left( \frac{k}{n} - x \right) p_{n,k}(a_n(x)) \right. \notag\\
&\left.\quad +\frac{1}{2} f_\mu''(x) \sum_{k=0}^n \left( \frac{k}{n} - x \right)^2 p_{n,k}(a_n(x)) + R_n(x)\right].
\end{align}
We now analyze the individual terms appearing in the expression above. First, we consider 
\begin{align}\label{ris1}
\sum_{k=0}^n \left( \frac{k}{n} - x \right) p_{n,k}(a_n(x))&= \sum_{k=0}^n \frac{k}{n}\,\, p_{n,k}(a_n(x)) - \sum_{k=0}^n x \,\,p_{n,k}(a_n(x))\notag\\
&=\mathcal{V}_n(e_1,x)-x=a_n(x)-x,
\end{align}
by the identity \eqref{Vne1} for the King-type operators, previously defined in \eqref{Vnf}, with \( r_n(x) \equiv a_n(x) \). Next, we consider the second-order moment
\begin{align}\label{ris2}
\sum_{k=0}^n \left( \frac{k}{n} - x \right)^2 p_{n,k}(a_n(x)) 
&= \sum_{k=0}^n\frac{k^2}{n^2}\,\, p_{n,k}(a_n(x)) -2 x \sum_{k=0}^n \frac{k}{n}\,\,p_{n,k}(a_n(x)) +x^2\notag\\[1mm]
&=\mathcal{V}_n(e_2,x)-2x\mathcal{V}_n(e_1,x)+x^2\notag\\[1mm]
&=\frac{a_n(x)}{n} + \frac{n-1}{n} a_n^2(x) - 2x a_n(x) + x^2,
\end{align}
applying the identities \eqref{Vne0}, \eqref{Vne1} and \eqref{Vne2} for the King-type operators, with \( r_n(x) \equiv a_n(x) \).
Now, multiplying both sides of \eqref{Ln-f} by $n$ and using the expressions derived in \eqref{ris1} and \eqref{ris2}, we obtain
\begin{align}\label{n(Ln-f)}
n\left[\mathscr{L}n(f,x)-f(x)\right]=&\ln_\mu(x) \bigg\{ f_\mu'(x) \cdot n(a_n(x) - x) \notag\\
&+ \frac{1}{2} f_\mu''(x) \Big[ a_n(x) + (n-1)a_n^2(x) - 2n x a_n(x) + n x^2 \Big] \notag\\&+ n R_n(x) \bigg\}\notag\\
=&\ln_\mu(x) \bigg\{ f_\mu'(x) \cdot n(a_n(x) - x)\notag\\
&+ \frac{1}{2} f_\mu''(x) \Big[ a_n(x)-a_n^2(x) + n(a_n(x)-x)^2\Big] \notag\\&+ n R_n(x) \bigg\}.
\end{align}
We now analyze the limit of each term in \eqref{n(Ln-f)} separately as \( n \to +\infty \). For the first-order term, we use the result \eqref{lim_n(an-x)}, which gives
\begin{align*}
f_\mu'(x)\cdot\lim_{n \to +\infty} n\left( a_n(x) - x \right) = f_\mu'(x)\left(\frac{x - x^2}{2(1+\mu)}\right).
\end{align*}
For the second-order term, we use Lemma~\ref{lemma_an} and \eqref{lim_n(an-x)}, from which \( n(a_n(x) - x)^2 \to 0 \), as $n \to +\infty$ uniformly with respect to $x\in [0,1]$. Hence, we have
\begin{align*}
\frac{1}{2} f_\mu''(x) \cdot& \lim_{n \to +\infty} 
\left[ a_n(x) - a_n^2(x) + n \left( a_n(x) - x \right)^2 \right] \\[1mm]
&= \frac{1}{2} f_\mu''(x) \left\{ \lim_{n \to +\infty} \left[ a_n(x) - a_n^2(x) \right] 
+ \lim_{n \to +\infty} \big[n \left( a_n(x) - x \right)^2\big] \right\} \\[1mm]
&= \frac{1}{2} f_\mu''(x) \left( x - x^2 \right).
\end{align*}
Now, we consider the last term \(nR_n(x)\), given by
\begin{align*}
    nR_n(x) = n \sum_{k=0}^n \left(\frac{k}{n} - x\right)^2 \psi\left(\frac{k}{n} - x\right) \, p_{n,k}(a_n(x)).
\end{align*}
Since the function \(\psi(y)\) is bounded and tends to zero as $y\to 0^+$, fixed \(\varepsilon > 0\), there exists \(\tilde{\delta} > 0\) such that, for every \(y\) with \(0<y \le \tilde{\delta}\), then \(|\psi(y)| < \varepsilon\). As a consequence, we can write
\begin{align*}
    n|R_n(x)| &\le n \Bigg\{ \sum_{\left|\frac{k}{n} - a_n(x)\right| \le \frac{\tilde{\delta}}{2}} + \sum_{\left|\frac{k}{n} - a_n(x)\right| > \frac{\tilde{\delta}}{2}} \Bigg\} \left( \frac{k}{n} - x \right)^2 \left| \psi\left( \frac{k}{n} - x \right) \right| \, \\[2mm]&\qquad \cdot\, p_{n,k}(a_n(x)) =: R_1 + R_2.
\end{align*}
To estimate \(R_1\), we observe that, for \(n\) sufficiently large, 
\[
\left| \frac{k}{n} - x \right| \leq \left| \frac{k}{n} - a_n(x) \right| + \left| a_n(x) - x \right| \leq \tilde{\delta},
\]
and so \(\left| \psi\left( \frac{k}{n} - x \right) \right| < \varepsilon\). Then, using the identity \eqref{ris2}, we obtain
\begin{align*}
    R_1 &\leq \varepsilon \cdot n \sum_{k=0}^n \left( \frac{k}{n} - x \right)^2 p_{n,k}(a_n(x)) \\
        &= \varepsilon \cdot n \left[ \frac{a_n(x)}{n} + \frac{n-1}{n} a_n^2(x) - 2x a_n(x) + x^2 \right]\\
        &= \varepsilon \left[ {a_n(x)} - a_n^2(x) +n(a_n(x) - x)^2\right].
\end{align*}
Now, by Lemma \ref{lemma_an}, we know that \(a_n(x)\) converges uniformly to $x$ and, by (\ref{Ogamma}), \(n(a_n(x) - x)^2\to 0\), as $n\rightarrow +\infty$, so the entire expression is bounded as \(n \to +\infty\). Therefore, there exists a constant \(D > 0\) such that, for sufficiently large $n\in\N,$
\[
R_1 \le \varepsilon \, D.
\]
Moreover, 
\begin{align*}
    R_2 &\le \| \psi \|_\infty\cdot n\ \sum_{\left|\frac{k}{n} - a_n(x)\right| > \frac{\tilde{\delta}}{2}}\left( \frac{k}{n} - x \right)^2 \,p_{n,k}(a_n(x)) \\
    &\le \| \psi \|_\infty\cdot n\sum_{\left|\frac{k}{n} - a_n(x)\right| > \frac{\tilde{\delta}}{2}} \left[2\left( \frac{k}{n} - a_n(x) \right)^2+2\left( a_n(x)-x\right)^2\right] \,p_{n,k}(a_n(x))\\
    &\le {2}\| \psi \|_\infty \left\{n\sum_{\left|\frac{k}{n} - a_n(x)\right| > \frac{\tilde{\delta}}{2}} \left( \frac{k}{n} - a_n(x) \right)^2 \,p_{n,k}(a_n(x))+n(a_n(x)-x)^2\right\},
\end{align*}
since obviously $(a+b)^2\le2a^2+2b^2$, for every $a,b \in \R$. To estimate the first term, notice that, by (1.6) in Chapter 10 of \cite{ConApp} (a corollary of Theorem 1.1 of \cite{ConApp}), for each \( \tilde{\delta} > 0 \)  there exists a constant $C=C({\tilde{\delta}})$, such that (for $s=r+1$ and $r=1$),
\[
\sum_{ \left| \frac{k}{n} - a_n(x) \right| > \frac{\tilde{\delta}}{2} } p_{n,k}(a_n(x)) \leq C\, n^{-2}.
\] 
Since \( \left( \frac{k}{n} - a_n(x) \right)^2 \le 1 \) for all \( k \le n \), it follows that
\begin{align*}
&{2}\| \psi \|_\infty \, n \sum_{ \left| \frac{k}{n} - a_n(x) \right| > \frac{\tilde{\delta}}{2}} \left( \frac{k}{n} - a_n(x) \right)^2 p_{n,k}(a_n(x)) 
\\& \le {2}\| \psi \|_\infty\, n \sum_{ \left| \frac{k}{n} - a_n(x) \right| > \frac{\tilde{\delta}}{2}} p_{n,k}(a_n(x)) \le {2}\| \psi \|_\infty C\, n^{-1} .
\end{align*}
The estimate of the second part is a consequence of Lemma \ref{lemma_an} and \eqref{lim_n(an-x)}, from which $n(a_n(x)-x)^2\to 0$, as $n \to +\infty.$ Therefore, for \(n\) sufficiently large, we have that 
$R_2\le A\| \psi \|_\infty\, \varepsilon$, for some $A>0$, and, consequently,
\begin{align*}
    n|R_n(x)|\le \varepsilon(D+A).
\end{align*}
By the results obtained for each term, we derive
\begin{align*}
\lim_{n \to +\infty} n\left[ \mathscr{L}_n(f,x) - f(x) \right] 
= \ln_\mu(x)\,\frac{x-x^2}{2}\left(\frac{f_\mu'(x)}{(1+\mu)}+ f_\mu''(x)\right).
\end{align*} This completes the proof.
\\\end{proof}

\section{Saturation results and inverse theorems}
From the previous Voronovskaja-type formula (Theorem \ref{Vor}), the following second order differential operator arises:
\be \label{diff-operator}
{\cal D}(f)(x)\, :=\ \frac12\, \ln_\mu(x)\, (x-x^2)\, \left[ \frac{1}{ \mu+1}f'_\mu(x)\, +\, f''_\mu(x) \right], \quad x \in [0,1],
\ee
where $f\in C^2([0,1])$. By simple calculations, it is easy to see that the above differential operator can be rewritten in the following standard form:
\be \label{abstract-form-D}
{\cal D}(f)\, =\, {1 \over w_2}\cdot \left(    {1 \over w_1} \left( {f \over w_0}  \right)^{'}  \, \right)^{'},
\ee
with
$$
w_0(x)\, :=\, \ln_\mu(x), \quad \quad \quad w_1(x)\, :=\, e^{-x/(\mu+1)}, 
$$
$$
w_2(x)\, :=\, {2 \over (x-x^2)\, \ln_\mu(x)}\, e^{x/(\mu+1)}, \quad \quad 0<x<1.
$$
The above functions $w_i$, $i=0,1,2$, are strictly positive and sufficiently regular in $(0,1)$, and they form an extended complete Tchebychev system and a fundamental set of solutions of the homogeneous equation $D(f)=0$ (see \cite{KAST1966}).
\\[0.5em]
In \cite{GACA2010} Garrancho and C\'ardenas-Morales, inspired by the 
so-called {\em parabola} {\em technique} introduced by Bajsanski and Bojanic in \cite{BABO1964} (see also Chapter 5 of \cite{Devore1972} for a more detailed explanation), established the following useful result involving differential operators of the form (\ref{abstract-form-D}).
\begin{lemma} \label{lemma-GCM}
Let $J$ be any fixed open interval on $[0,1]$. Let $g$, $h \in C(J)$ and $t_0 < t_1 < t_2$ in $J$, be such that $g(t_1)=g(t_2)=0$, and $g(t_0)>0$. Then, there exist $\alpha<0$, a function $z$ which is a classical solution of the second order differential equation $D(f)=0$ on $J$, and a point $x \in (t_1,t_2)$ such that:
$$
\alpha\, h(t)+z(t)\, \mau\, g(t), \quad t \in [t_1,t_2], 
$$
and
$$
\alpha\, h(x)\, +\, z(x)\, =\, g(x).
$$
\end{lemma}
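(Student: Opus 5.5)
The plan is to construct $z$ as the first member of a monotone one-parameter family of solutions of $D(f)=0$ that touches $G_\alpha:=g-\alpha h$ from above on $[t_1,t_2]$. The first conclusion then holds by construction, and the work is in showing that the contact point can be forced into the open interval $(t_1,t_2)$ by taking $|\alpha|$ small.

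\textbf{The solution space.} By (\ref{abstract-form-D}), every classical solution of $D(f)=0$ on $J$ has the form
$$
z=w_0\,(A+B\,W_1),\qquad W_1(t):=\int_{t_0}^t w_1(s)\,ds .
$$
Since $w_0,w_1>0$, the function $W_1$ is strictly increasing. So $\{w_0,\,w_0W_1\}$ is a two-dimensional Chebyshev (disconjugate) system: a nonzero solution has at most one zero in $J$, and two solutions agreeing at two points coincide. I will use two particular solutions.
\begin{itemize}
\item[(i)] $\psi:=w_0W_1$. It satisfies $\psi(t_0)=0$ and $\psi>0$ on $J\cap(t_0,\infty)$, hence on $[t_1,t_2]$.
\item[(ii)] $\phi$, the solution with $\phi(t_0)=1$ and $\phi(t_1)=0$. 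It is negative on $(t_1,t_2]$.
\end{itemize}

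\textbf{The family and the touching point.} For $\alpha<0$ consider the solutions
$$
z_c:=G_\alpha(t_0)\,\phi+c\,\psi ,
$$
which all pass through $(t_0,G_\alpha(t_0))$. For $\alpha$ close to $0$, continuity gives $G_\alpha(t_0)>0$. Let $c^*$ be the least $c$ with $z_c\ge G_\alpha$ on $[t_1,t_2]$. It exists and is finite because $\psi$ is bounded below by a positive constant on $[t_1,t_2]$: explicitly, $c^*=\max_{[t_1,t_2]}\bigl(G_\alpha-G_\alpha(t_0)\phi\bigr)/\psi$. Put $z:=z_{c^*}$ and let $x\in[t_1,t_2]$ be a point where the maximum is attained. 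Then $\alpha h+z\ge g$ on $[t_1,t_2]$, with equality at $x$.

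\textbf{Excluding the endpoints (main obstacle).} It remains to show that, for suitable small $|\alpha|$, the maximizer $x$ is not $t_1$ or $t_2$. This is where $g(t_0)>0$, $g(t_1)=g(t_2)=0$ and disconjugacy enter, and the order of $t_0<t_1<t_2$ matters; I expect this step to be the delicate one.
\begin{itemize}
\item[(a)] Let $\alpha\to0^-$. The endpoint values of $(G_\alpha-G_\alpha(t_0)\phi)/\psi$ tend to $-g(t_0)\phi(t_i)/\psi(t_i)$, which is $0$ at $t_1$ and positive at $t_2$.
\item[(b)] Compare with $\alpha=0$. The unique solution through $(t_0,g(t_0))$ and $(t_2,0)$ is positive on $[t_0,t_2)$. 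Since $g(t_1)=0$, the maximum cannot sit at $t_1$ with a strict gap, and I would argue the interior maximum strictly exceeds the endpoint values, perturbing with a tiny extra multiple of $\psi$ if needed.
\item[(c)] If the contact were at an endpoint, I would aim for a contradiction with the one-zero property. The difference between $z$ and the solution through $(t_0,g(t_0))$ and that endpoint would be a nonzero solution with two zeros, namely $t_0$ and the endpoint. Combined with the sign information coming from $g(t_1)=g(t_2)=0$, this should give the contradiction.
\end{itemize}
If this direct argument proves fragile, my fallback is to pass to the variable $s=W_1(t)$ and the function $G_\alpha/w_0$. There solutions become affine functions, and the claim reduces to the classical Bajsanski--Bojanic parabola argument, where touching at an interior point is elementary.
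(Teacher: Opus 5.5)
\textbf{Gap: the endpoint-exclusion step cannot be done, because the statement as printed is false.} Your construction of $c^*$ and the contact point is fine, but with $t_0<t_1<t_2$ a positive value of $g$ outside $[t_1,t_2]$ says nothing about $g$ on $[t_1,t_2]$. Here is a counterexample. Take $h\equiv 0$, $W_1(t)=\int_{t_0}^t w_1$, $a=W_1(t_1)$, $b=W_1(t_2)$ (so $0<a<b$), and $g=w_0\,(W_1-a)(W_1-b)$. Then $g(t_1)=g(t_2)=0$ and $g(t_0)=w_0(t_0)\,ab>0$. Yet for every solution $z=w_0(A+BW_1)$, the quotient $(\alpha h+z-g)/w_0$ in the variable $s=W_1(t)$ equals $A+Bs-(s-a)(s-b)$. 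This is strictly concave on $[a,b]$, so it cannot be nonnegative there and vanish at an interior point. Restricting to $h$ with $D(h)=1$, as in the saturation proof, does not help: then $h/w_0$ is strictly convex in $s$, so $\alpha h/w_0$ is strictly concave for $\alpha<0$.

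Your own family shows the failure concretely. For $\alpha=0$ one finds $(g-g(t_0)\phi)/\psi=s-a$ on $[t_1,t_2]$, so the maximiser is $t_2$. Step (c) then has nothing to contradict: contact at $t_2$ just means $z$ \emph{is} the solution through $(t_0,G_\alpha(t_0))$ and $(t_2,G_\alpha(t_2))$, so your ``difference'' is identically zero. The fallback fails for the same reason, since the Bajsanski--Bojanic argument needs the positive point between the two zeros.

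\textbf{Missing observation: the ordering is a misprint.} The paper gives no proof; it quotes Garrancho--C\'ardenas-Morales. In the saturation theorem it applies the lemma with $t_1=0$, $t_2=1$ and $t_0\in(0,1)$, i.e.\ with $t_1<t_0<t_2$. Under that hypothesis no family through $t_0$ is needed:
\begin{itemize}
\item Let $\ell^h$ be the solution interpolating $h$ at $t_1,t_2$; it exists because $W_1$ is strictly increasing.
\item Put $G_\alpha=g-\alpha h$ and $M_\alpha=\max_{[t_1,t_2]}(G_\alpha+\alpha\ell^h)/w_0$.
\item This quotient vanishes at $t_1$ and $t_2$. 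At $t_0$ it equals $[g(t_0)-\alpha(h-\ell^h)(t_0)]/w_0(t_0)$, which is positive for $\alpha<0$ close to $0$.
\item Hence $M_\alpha>0$ is attained at some $x\in(t_1,t_2)$, and $z=-\alpha\ell^h+M_\alpha w_0$ satisfies both conclusions.
\end{itemize}
This is exactly your fallback (an affine majorant in the variable $s$), and it works once $t_0$ lies between $t_1$ and $t_2$.
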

Now, we can prove the following
\begin{theorem}
Let $f \in C([0,1])$ be fixed. Then
\be \label{ipotesi-o-piccolo}
\left| {\mathscr L}_n (f, x)-f(x) \right|\ =\ o(n^{-1}), \quad \quad 0<x<1,
\ee
if and only if $f$ is a classical solution of the following second order differential equation:
\be \label{eq-diff-saturation}
f_\mu'(x)+ (1+\mu)\, f_\mu''(x)\ =\ 0.
\ee
\end{theorem}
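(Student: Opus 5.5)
The "if" direction follows from Theorem \ref{Vor}. Suppose $f\in C^2$ solves \eqref{eq-diff-saturation}. Then the bracket in the Voronovskaja formula vanishes, since $\frac{f_\mu'}{1+\mu}+f_\mu''=\frac{1}{1+\mu}\big(f_\mu'+(1+\mu)f_\mu''\big)=0$. Hence $n[\mathscr L_n(f,x)-f(x)]\to 0$ for every $x\in(0,1)$, which is exactly \eqref{ipotesi-o-piccolo}. One point needs a remark: a classical solution on $(0,1)$ is explicitly $f_\mu(x)=A+Be^{-x/(1+\mu)}$. This is smooth on all of $[0,1]$, so $f=\ln_\mu f_\mu\in C^2([0,1])$ and Theorem \ref{Vor} applies.

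For the "only if" direction I would argue by contradiction with the parabola technique, Lemma \ref{lemma-GCM}. Assume \eqref{ipotesi-o-piccolo} holds but $f$ is not a solution of $\mathcal D(f)=0$ on $(0,1)$. By \eqref{abstract-form-D}, the kernel of $\mathcal D$ is $\{w_0(c_1+c_2\int w_1)\}$, i.e. $\{\ln_\mu(A+Be^{-x/(1+\mu)})\}$, a two-dimensional Haar space. Since $f$ is not in it, there are $t_0<t_1<t_2$ in some open $J\subset(0,1)$ and a solution $z_0$ interpolating $f$ at $t_1,t_2$ with $g:=f-z_0$ satisfying $g(t_1)=g(t_2)=0$, $g(t_0)\ne0$. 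Replacing $f$ by $-f$ if needed, we may take $g(t_0)>0$; the choice $t_0<t_1$ is handled by the lemma as stated. Then $g$ also satisfies the little-o hypothesis, since $\mathscr L_n$ is linear and (by the "if" part) $\mathscr L_n z_0 - z_0 = o(n^{-1})$.

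Next I would apply Lemma \ref{lemma-GCM} with this $g$ and an auxiliary $h\in C^2$ chosen so that $\mathcal D(h)>0$ on $[t_1,t_2]$. A concrete choice is $h=\ln_\mu\cdot\varphi$ with $\varphi''+\varphi'/(1+\mu)=1$, e.g. $\varphi(x)=(1+\mu)x$, which gives $\mathcal D(h)=\tfrac12\ln_\mu(x)(x-x^2)>0$ on $(0,1)$. The lemma yields $\alpha<0$, a solution $z$, and a point $x\in(t_1,t_2)$ with $u:=\alpha h+z-g\ge0$ on $[t_1,t_2]$ and $u(x)=0$. At this point I would like to use positivity: $\mathscr L_n u(x)\ge 0=u(x)$. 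Theorem \ref{Vor} applied to $\alpha h+z$ then gives $n[\mathscr L_n(\alpha h+z)(x)-(\alpha h+z)(x)]\to\alpha\mathcal D(h)(x)<0$, while $n[\mathscr L_n g(x)-g(x)]\to0$. Together these give $\liminf n[\mathscr L_n u(x)-u(x)]=\alpha\mathcal D(h)(x)<0$, which contradicts $\mathscr L_n u(x)\ge 0$.

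The main obstacle is that $u\ge0$ holds only on $[t_1,t_2]$, not on all of $[0,1]$, while $\mathscr L_n$ samples on all of $[0,1]$. I would repair this by localization. Modify $u$ outside $[t_1,t_2]$ into $\tilde u\ge0$ on $[0,1]$; for instance, add a large multiple of a $C^2$ function vanishing near $[t_1',t_2']\ni x$. Then show that the contribution of nodes with $|k/n-a_n(x)|>\delta$ is $O(n^{-2})$, using the tail estimate from \cite{ConApp} already used in the proof of Theorem \ref{Vor} together with the boundedness of $1/\ln_\mu$. This gives $n[\mathscr L_n(u-\tilde u)(x)]\to0$, so the contradiction goes through with $\tilde u$. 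A final point to check is that "classical solution on $(0,1)$" extends continuously to $[0,1]$ via the explicit formula, so that $f$ itself equals such a solution on the whole interval.
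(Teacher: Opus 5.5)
For sufficiency your argument is the paper's. For necessity your proof is correct once the fix below is made, but it takes a different route.

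\textbf{Comparison with the paper.} The paper works globally. It solves $\mathcal D(y)=0$ with $y(0)=f(0)$, $y(1)=f(1)$, so $g=f-y$ vanishes at the endpoints and has a nonzero interior extremum. It then applies Lemma \ref{lemma-GCM} on all of $[0,1]$. The resulting $G=\alpha h+z-g$ is nonnegative on the whole interval, so positivity of $\mathscr L_n$ applies directly and no localization is needed.

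You interpolate at interior nodes, which costs you the cutoff step. That step is sound, and you can even drop the tail estimate: for $\phi\in C^2([0,1])$ vanishing near $x$, Theorem \ref{Vor} gives $n[\mathscr L_n\phi(x)-\phi(x)]\to\mathcal D(\phi)(x)=0$.

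In exchange, your explicit $h=(1+\mu)x\ln_\mu(x)$ is cleaner than the paper's $h$. Every $C^2([0,1])$ function satisfies $\mathcal D(\cdot)=0$ at $x=0,1$, because of the factor $x-x^2$. Hence the paper's $h$ with $\mathcal D(h)=1$ cannot be $C^2$ up to the endpoints; its derivative blows up logarithmically there. Applying Theorem \ref{Vor} to it therefore tacitly requires a local Voronovskaja formula. Your $h$ is smooth on $[0,1]$, and $\mathcal D(h)=\frac12\ln_\mu(x)(x-x^2)>0$ is all the contradiction uses.

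Your local argument also yields a local saturation statement. Putting your $h$ into the paper's global setup (zeros at $0$ and $1$) would give the shortest complete proof.

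\textbf{Correction: the ordering $t_0<t_1<t_2$.} Do not take this ordering. As printed, Lemma \ref{lemma-GCM} contains a misprint: the point where $g>0$ must lie between the two zeros, and that is how the paper actually uses it, with $t_0\in(0,1)$ and zeros at $0,1$.

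With $t_0$ outside $[t_1,t_2]$ the conclusion can fail. Take $g\equiv 0$ on $[t_1,t_2]$. Then $u=\alpha h+z\ge0$ on $[t_1,t_2]$ with $u(x)=0$ at an interior point, so $u'(x)=0$ and $u''(x)\ge0$. Hence $u_\mu'(x)=0$ and $u_\mu''(x)=u''(x)/\ln_\mu(x)$, which gives $\mathcal D(u)(x)=\frac12(x-x^2)u''(x)\ge0$. This contradicts $\mathcal D(u)(x)=\alpha\mathcal D(h)(x)<0$.

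The repair is immediate. Interpolation from the kernel at two distinct points is unique. So if $f$ coincided with its interpolant on every $[t_1,t_2]\subset(0,1)$, it would be a single kernel element on $(0,1)$. Hence you can choose $t_1<t_0<t_2$ with $g(t_0)\neq0$, and the rest of your argument goes through unchanged.
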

\begin{proof}
We prove the necessary part of the thesis, since the sufficient one immediately comes from the previous Voronovskaja-type theorem.
\\[1mm]
Let now $y:[0,1] \to \R$ be the classical solution of the following differential problem with boundary conditions:
$$
\begin{cases}
\displaystyle {\cal D}(y)(x)= 0, \quad 0<x<1, \\
y(0) = f(0), \\
y(1) = f(1),
\end{cases}
$$
where ${\cal D}$ is defined in (\ref{diff-operator}). 
\\[2mm]
Now, we define $g(x):=f(x)-y(x)$, $x \in [0,1]$. By construction, it results $g \in C([0,1])$, and $g(0)=g(1)=0$.
\\[1mm]
From this consideration we can deduce that at least one between the maximum or the minimum of $g$ is assumed in $t_0 \in (0,1)$, otherwise we must have $f\equiv y$ on $[0,1]$ and the thesis would be trivially satisfied.
\\[1mm]
Suppose for instance that $t_0$ is a maximum point, and assume by contradiction that $g(t_0)>0$ (the proof below is completely analogous if $t_0$ is a minimum; it is sufficient to repeat the following computations with $-g$ in place of $g$).
\\[2mm]
Let us now consider  any fixed solution $h:[0,1] \to \R$ of the following non-homogeneous differential problem:
$$
{\cal D}(h)(x)\ =\ 1,\ \quad 0\miu x \miu 1.
$$
By Lemma \ref{lemma-GCM} applied to $g$ and $h$ on $(0,1)$, we know that there exist $\alpha<0$, a function $z$ on $(0,1)$ such that ${\cal D}(z)=0$ and a point $x \in (0,1)$ such that
$$
G(t)\, :=\, \alpha\, h(t)\, +\, z(t)\, -\, g(t)\ \mau\ 0, \quad 0\miu t \miu 1,
$$
(where the above function $z$ can be considered extended as a $C^2$-function on $[0,1]$), and $G(x)=0$.
\\[1mm]
Recalling that ${\mathscr L}_n$ is a positive linear operator and $G$ is non-negative, we have ${\mathscr L}_n(G,x) \mau 0$, and thus using that $G(x)=0$ we can write what follows:
$$
n \left[{\mathscr L}_n(G,x)\, -\, G(x)\right]\, \mau 0,
$$
namely,
$$
n \left[{\mathscr L}_n(\alpha\, h\, +\, z\, -\, f\, +\, y,\, x)\, -\, \alpha\, h(x)\, -\, z(x)\, +\, f(x) - y(x)\right]\, \mau 0,
$$
from which we get
$$
n \left[{\mathscr L}_n(\alpha\, h\, +\, z\, +\, y,\, x)\, -\, \alpha\, h(x)\, -\, z(x)\, -\, y(x)\right]\, \mau\ n \left[{\mathscr L}_n(f,\, x)\, -\, f(x)\right].
$$
Now, passing to the limit for $n \to +\infty$ and using (\ref{ipotesi-o-piccolo}) and Theorem \ref{Vor} we obtain that
$$
{\cal D}(\alpha\, h\, +\, z\, +\, y)(x)\ = \alpha\, {\cal D}(h)(x)\, +\, {\cal D}(z)(x)\, +\, {\cal D}(y)(x)\, =\ \alpha \mau 0,
$$
from which we get a contradiction.
This completes the proof. \\
\end{proof}

For the sake of completeness, we can easily observe that solutions of the differential equations (\ref{eq-diff-saturation}) are of the form:
$$
f(x)\ =\ A\, \ln_\mu(x)\, +\, B\, \ln_\mu(x)\, e^{-x/(\mu+1)}, \quad 0<x<1, \quad A,\, B \in \R.
$$
Finally, also the following inverse theorem of approximation can be established; it immediately follows by a direct application of Proposition 2 of \cite{GACA2010}.
\begin{theorem}
Let $f \in C([0,1])$ be fixed. Then
\begin{align*}
n\left| {\mathscr L}_n (f, x)-f(x) \right|\ \miu\ M + o(1), \quad \quad 0<x<1, \quad as \quad n \to +\infty,
\end{align*}
if and only if, for almost every $t \in (0,1)$:
$$
|f_\mu'(t)+ (1+\mu)\, f_\mu''(t)|\ \miu\ M.
$$
\end{theorem}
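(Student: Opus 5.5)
The plan is to obtain the statement as an instance of Proposition 2 of \cite{GACA2010}, following the same route as the saturation theorem above. First I would check the hypotheses of that proposition for $({\mathscr L}_n)_n$. The operators ${\mathscr L}_n$ are positive and linear. By Theorem \ref{Vor}, $n[{\mathscr L}_n(f,x)-f(x)]\to{\cal D}(f)(x)$ for every $f\in C^2([0,1])$. The operator ${\cal D}$ in (\ref{diff-operator}) can be written in the form (\ref{abstract-form-D}), with the weights $w_0,w_1,w_2$ forming an extended complete Tchebychev system on $(0,1)$. With these checks done, Proposition 2 of \cite{GACA2010} gives the equivalence between $n|{\mathscr L}_n(f,x)-f(x)|\le M+o(1)$ on $(0,1)$ and $|{\cal D}(f)(t)|\le M$ for a.e.\ $t\in(0,1)$. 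Here ${\cal D}(f)$ is understood in the generalized sense: $\frac{1}{w_1}\left(\frac{f}{w_0}\right)'$ is locally absolutely continuous and the inequality holds almost everywhere.

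The second step translates $|{\cal D}(f)|\le M$ into the form of the statement. From (\ref{diff-operator}) we have
\begin{align*}
{\cal D}(f)(t)=\phi(t)\bigl[f_\mu'(t)+(1+\mu)f_\mu''(t)\bigr],\qquad \phi(t):=\frac{(t-t^2)\ln_\mu(t)}{2(1+\mu)}.
\end{align*}
On $[0,1]$ we have $0\le\phi(t)\le \frac{\ln(2+\mu)}{8(1+\mu)}<1$, since $\ln(2+\mu)<1+\mu$ for $\mu>0$.

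For the sufficiency direction, assume $|f_\mu'+(1+\mu)f_\mu''|\le M$ almost everywhere. Then $|{\cal D}(f)|\le\phi M\le M$ almost everywhere, so Proposition 2 gives $n|{\mathscr L}_n f-f|\le M+o(1)$. I would also check this directly by the Voronovskaja argument of Theorem \ref{Vor}, with the Peano remainder replaced by the integral remainder for $f_\mu\in W^{2,\infty}_{\rm loc}$. The moment identities (\ref{ris1}), (\ref{ris2}) and the bound $n(a_n(x)-x)^2\to0$ would control all the terms.

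For the necessity direction, I would use the parabola technique exactly as in the saturation proof. Take $h$ with ${\cal D}(h)=1$ and apply Lemma \ref{lemma-GCM} to $g=f-y\mp\beta h$, on small subintervals and for every $\beta>M$. Positivity of ${\mathscr L}_n$, together with Theorem \ref{Vor} applied to $\alpha h+z+y$, forces the distributional quantity ${\cal D}(f)$ to lie between $-M$ and $M$. This gives the a.e.\ bound on the second-order expression after absolute continuity is established.

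The main obstacle is this last normalization. The parabola technique naturally yields $|{\cal D}(f)|\le M$, and because $\phi$ vanishes at $0$ and $1$ this bounds the bracket $f_\mu'+(1+\mu)f_\mu''$ only by $M/\phi$, not by $M$. Matching the statement exactly therefore requires reading $M$ in the normalization of Proposition 2 of \cite{GACA2010}, that is, with the weight $\phi$ absorbed into the differential operator. At this point I would go back to the precise formulation of that proposition to fix the constant.
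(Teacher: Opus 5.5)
Your route is the paper's route. The paper's entire proof is the remark that the result follows from a direct application of Proposition~2 of \cite{GACA2010}, using Theorem~\ref{Vor} and the factorization \eqref{abstract-form-D}. Your check of the hypotheses and your ``if'' direction are fine, since $|\mathcal{D}(f)|=\phi\,|f_\mu'+(1+\mu)f_\mu''|\le \phi M\le M$.

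The obstacle you flag at the end is genuine, and it cannot be removed by going back to the exact formulation of Proposition~2. The ``only if'' direction, read literally, is false, so no normalization in \cite{GACA2010} will produce it.

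Take $f(x):=x\ln_\mu(x)$, so that $f_\mu=e_1$ and $f_\mu'+(1+\mu)f_\mu''\equiv 1$. Since $B_n$ reproduces $e_1$, \eqref{Rel} gives $\mathscr{L}_n(f,x)=\ln_\mu(x)\,a_n(x)$. Then \eqref{lim_n(an-x)} yields
\[
n\left|\mathscr{L}_n(f,x)-f(x)\right|=\ln_\mu(x)\,n\,(a_n(x)-x)\longrightarrow \phi(x)\le \frac{\ln(2+\mu)}{8(1+\mu)}=:M<1,\qquad 0<x<1 .
\]
So $n|\mathscr{L}_n(f,x)-f(x)|\le M+o(1)$ on $(0,1)$, while $|f_\mu'+(1+\mu)f_\mu''|=1>M$ everywhere.

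What Proposition~2, or your parabola argument with $\mathcal{D}(h)=1$, actually delivers is the equivalence with $|\mathcal{D}(f)(t)|\le M$ a.e. That is, $|f_\mu'(t)+(1+\mu)f_\mu''(t)|\le M/\phi(t)$, which degenerates at $0$ and $1$ as you noticed.

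If you want the bracket itself bounded by $M$, the weight must move to the left-hand side: $n|\mathscr{L}_n(f,x)-f(x)|\le M\phi(x)+o(1)$ on $(0,1)$ if and only if $|f_\mu'+(1+\mu)f_\mu''|\le M$ a.e. This follows by localizing both directions of the same argument to shrinking subintervals $J$, where the relevant constant becomes $M\max_J\phi$, and then using the continuity of $\phi$.

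So do not try to close your last step as stated. State and prove one of these corrected versions instead. The paper's one-line appeal to \cite{GACA2010} silently drops exactly the factor $\phi$ you identified.
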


\section{Shape-preserving properties}
In this section we study some shape preserving properties of the linear and positive operators $\mathscr{L}_n$.
\vspace{1mm} 

Since 
\begin{align*} \displaystyle
    p_{n,k}'(a_n(x)) = &\, \binom{n}{k}k(a_n(x))^{k-1}a_n'(x)(1-a_n(x))^{n-k} 
    \\ & -\binom{n}{k}(a_n(x))^k(n-k)(1-a_n(x))^{n-k-1}a_n'(x),
\end{align*}
by (\ref{Rel}) there holds
\begin{align}
\left( \frac{\mathscr{L}_n f}{\ln_\mu} \right)'(x) 
&= \left( B_n\left(f_\mu,a_n\right) \right)'(x) 
= \sum_{k=0}^n f_\mu\left( \frac{k}{n} \right) \, p_{n,k}'(a_n)(x) \notag \\
&= a_n'(x) \sum_{k=1}^n f_\mu\left( \frac{k}{n} \right) \binom{n}{k} \, k \, (a_n(x))^{k-1} (1 - a_n(x))^{n - k} \notag \\
&\quad - a_n'(x) \sum_{k=0}^{n-1} f_\mu\left( \frac{k}{n} \right) \binom{n}{k} (n - k) \, (a_n(x))^k (1 - a_n(x))^{n - k - 1} \notag \\
&= a_n'(x) \sum_{k=1}^n f_\mu\left( \frac{k}{n} \right) \frac{n!}{(k - 1)! (n - k)!} \, (a_n(x))^{k - 1} (1 - a_n(x))^{n - k} \notag \\
&\quad - a_n'(x) \sum_{k=0}^{n-1} f_\mu\left( \frac{k}{n} \right) \frac{n!}{k! (n - k - 1)!} \, (a_n(x))^k (1 - a_n(x))^{n - k - 1} \notag \\
&= n \, a_n'(x)\Bigg[ \sum_{k=0}^{n-1} f_\mu\left(\frac{k+1}{n}\right)\binom{n-1}{k}(a_n(x))^k(1-a_n(x))^{n-1-k} \notag \\
&\quad -\sum_{k=0}^{n-1} f_\mu\left(\frac{k}{n}\right)\binom{n-1}{k}(a_n(x))^k(1-a_n(x))^{n-1-k} \Bigg] \notag \\
&= n\, a_n'(x) \sum_{k=0}^{n-1} \left[f_\mu\left(\frac{k+1}{n}\right)-f_\mu\left(\frac{k}{n}\right)\right]p_{n-1,k}(a_n(x))\notag\\
&=  a_n'(x) \sum_{k=0}^{n-1} \Delta_1 f_\mu\left( \frac{k}{n} \right) p_{n-1,k}(a_n(x)), \label{der1}
\end{align}
where
\begin{align*}
\Delta_1 f_\mu\left( \frac{k}{n} \right) := \left[f_\mu\left( \frac{k+1}{n} \right) - f_\mu\left( \frac{k}{n} \right)\right]n
\end{align*}
denotes the divided difference of order 1 of $f_{\mu}$ (see \cite{ConApp}). Therefore,
\begin{align}\label{der1Lnf}
\mathscr{L}_n'(f,x) 
&= \frac{B_n\left(f_\mu, a_n(x)\right)}{1 + \mu + x} 
+ \,\ln_\mu(x)\,a_n'(x)\sum_{k=0}^{n-1} \Delta_1 f_\mu\left( \frac{k}{n} \right) p_{n-1,k}(a_n(x)).
\end{align}
For the second derivative, we get
\begin{align*}
     \left(\frac{\mathscr{L}_n f}{\ln_\mu}\right)''(x)=& {d\over dx}\left(n\, a_n'(x) \sum_{k=0}^{n-1} \left[f_\mu\left(\frac{k+1}{n}\right)-f_\mu\left(\frac{k}{n}\right)\right]p_{n-1,k}(a_n(x))\right)\\[1em]
     =&  a_n''(x) S(x)+ a_n'(x)S'(x),
\end{align*} where 
\begin{align*}
    S(x):=\sum_{k=0}^{n-1} \Delta_1 f_\mu\left( \frac{k}{n} \right)\,\,p_{n-1,k}(a_n(x)),
\end{align*} and 
\begin{align*}
    S'(x) &= \sum_{k=0}^{n-1} \Delta_1 f_\mu\left( \frac{k}{n} \right) p'_{n-1,k}(a_n(x)) \\
    &= a_n'(x) \left[ \sum_{k=1}^{n-1} \Delta_1 f_\mu\left( \frac{k}{n} \right) \binom{n-1}{k} k \, a_n(x)^{k-1} (1 - a_n(x))^{n-1-k} \right. \\
    &\quad \left. - \sum_{k=0}^{n-2} \Delta_1 f_\mu\left( \frac{k}{n} \right) \binom{n-1}{k} (n-1-k) a_n(x)^k (1 - a_n(x))^{n-2-k} \right] \\
    &= n(n-1) a_n'(x) \sum_{k=0}^{n-2} \left[ f_\mu\left( \frac{k+2}{n} \right) - 2 f_\mu\left( \frac{k+1}{n} \right) + f_\mu\left( \frac{k}{n} \right) \right]\\&\quad\cdot p_{n-2,k}(a_n(x))\\
    &=\frac{n-1}{n} a_n'(x) \sum_{k=0}^{n-2} \Delta_2 f_\mu\left(\frac{k}{n}\right)\, p_{n-2,k}(a_n(x)), 
\end{align*}
where
\begin{align*}
\Delta_2 f_\mu\left(\frac{k}{n}\right) := \left[f_\mu\left(\frac{k+2}{n}\right) - 2 f_\mu\left(\frac{k+1}{n}\right) + f_\mu\left(\frac{k}{n}\right)\right] n^2
\end{align*}
denotes the divided difference of order 2 of $f_{\mu}$. 
Therefore, the second derivative of ${\mathscr{L}_n f}/{\ln_\mu}$ is
\begin{align*} 
    \left(\frac{\mathscr{L}_n f}{\ln_\mu}\right)''(x)= & \hspace{1mm}  a_n''(x)
    \sum_{k=0}^{n-1}\Delta_1 f_\mu\left( \frac{k}{n} \right)\,\, p_{n-1,k}(a_n(x)) \notag \\
    & +\frac{n-1}{n} a_n'(x)^2 \hspace{1mm} \sum_{k=0}^{n-2} \Delta_2 f_\mu\left( \frac{k}{n} \right)\,\,p_{n-2,k}(a_n(x)).
\end{align*}
Since \(a_n(x)\) is increasing  on the interval \([0,1]\), it follows from expressions~\eqref{der1} and~\eqref{der1Lnf} that, if $f_{\mu}(x)$ is increasing on $[0,1]$, both \({\mathscr{L}_n f}/{\ln_\mu}\) and \(\mathscr{L}_nf\) are increasing on \([0,1]\). And if in addition $f_\mu(x)$ is concave then, taking into account that $a_n(x)$ is concave,  \({\mathscr{L}_n f}/{\ln_\mu}\) is concave as well.\\

Now, in addition to the shape-preserving properties discussed above, we show that the operator $\mathscr{L}_n$ is bounded with respect to the $BV_\mu$-norm. Let us recall that $AC([0,1])$, the space of absolutely continuous functions on the interval $[0,1]$, is a closed subspace of the space $BV_{\mu}([0,1])$, the space of all the functions of bounded variation on $[0,1]$ with respect to the seminorm
\begin{align*} \displaystyle
    Var_{[0,1]}\left(\frac{f}{\ln_\mu}\right)=Var_{[0,1]} (f_\mu),
\end{align*}
where
\begin{align*}
    Var_{[0,1]} (f_\mu) := \sup_{P\in \mathcal{P}} \sum_{i=0}^{m-1} \left| f_\mu(x_{i+1}) - f_\mu(x_i) \right|,
\end{align*}
and \(\mathcal{P}\) denotes the family of all the partitions of the interval \([0,1]\), that is, sets of the form \(P = \{0 = x_0 < x_1 < \ldots < x_m = 1\}\).
Furthermore, we recall that $BV_{\mu}([0,1])$ can be endowed with the norm
\begin{align*} \displaystyle
    \Vert f \Vert_{BV_\mu} := Var_{[0,1]}\left(\frac{f}{\ln_\mu} \right)+|f(0)|.
\end{align*}
Obviously, $BV_{\mu}([0,1])$ coincides with $BV([0,1])$, the classical space of the functions of bounded variation in the sense of Jordan on $[0,1].$

We now proceed to establish the boundedness of the operator $\mathscr{L}_n$ with respect to the $BV_{\mu}$-norm. 
\begin{theorem}
    If $f \in BV([0,1])$, then for all $n \in \mathbb{N}$ we have that
    \begin{align*} \displaystyle
        \Vert \mathscr{L}_n f \Vert_{BV_\mu} \leq \Vert f \Vert_{BV_\mu}.
    \end{align*}
\end{theorem}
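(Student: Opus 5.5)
We have to show $Var_{[0,1]}((\mathscr{L}_nf)_\mu) + |\mathscr{L}_nf(0)| \le Var_{[0,1]}(f_\mu)+|f(0)|$. By (\ref{Rel}), $(\mathscr{L}_nf)_\mu(x) = \mathscr{L}_nf(x)/\ln_\mu(x) = B_n(f_\mu,a_n(x))$. The plan is to handle the variation term and the boundary term separately, using the fact that $a_n$ is a continuous increasing bijection of $[0,1]$ onto itself.

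\emph{Boundary term.} Since $a_n(0)=0$, we have $p_{n,k}(0)=\delta_{k,0}$. Hence
$$\mathscr{L}_nf(0)=\ln_\mu(0)\,f_\mu(0)=f(0),$$
and the boundary terms coincide exactly.

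\emph{Variation term.} Because $a_n$ is a monotone homeomorphism of $[0,1]$, total variation is invariant under this reparametrization. Indeed, partitions of $[0,1]$ correspond bijectively to partitions via $t=a_n(x)$. Therefore
$$Var_{[0,1]}\big(B_n(f_\mu,a_n(\cdot))\big)=Var_{[0,1]}\big(B_nf_\mu\big).$$
It remains to prove the classical variation-diminishing bound $Var_{[0,1]}(B_ng)\le Var_{[0,1]}(g)$ with $g=f_\mu$. Note that $f_\mu\in BV$, since $1/\ln_\mu$ is smooth and bounded away from zero.

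\emph{Bernstein estimate.} $B_ng$ is a polynomial, hence $C^1$, so its variation equals $\int_0^1|(B_ng)'(t)|\,dt$. By the computation leading to (\ref{der1}) with $a_n$ replaced by the identity,
$$(B_ng)'(t)=n\sum_{k=0}^{n-1}\left[g\left(\tfrac{k+1}{n}\right)-g\left(\tfrac kn\right)\right]p_{n-1,k}(t).$$
By the triangle inequality and $\int_0^1p_{n-1,k}(t)\,dt=1/n$,
$$\int_0^1|(B_ng)'(t)|\,dt\le\sum_{k=0}^{n-1}\left|g\left(\tfrac{k+1}{n}\right)-g\left(\tfrac kn\right)\right|\le Var_{[0,1]}(g).$$
The last sum is the variation of $g$ along one particular partition, so it is bounded by the supremum over all partitions. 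Alternatively, one can apply (\ref{der1}) directly with $a_n$ and substitute $t=a_n(x)$, using $a_n'>0$; this yields the same bound without invoking invariance under reparametrization. Combining the boundary and variation terms gives $\Vert\mathscr{L}_nf\Vert_{BV_\mu}\le\Vert f\Vert_{BV_\mu}$.

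\emph{Main obstacle.} The only delicate step is justifying the change of variables, i.e. that $a_n$ is a $C^1$ increasing bijection of $[0,1]$ with $a_n(0)=0$ and $a_n(1)=1$. This is immediate from (\ref{an}), since
$$a_n'(x)=\frac{\varepsilon_n}{(1+x\varepsilon_n)\ln(1+\varepsilon_n)}>0.$$
Everything else is the standard variation-diminishing argument for $B_n$.
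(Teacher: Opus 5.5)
Your proof is correct and is essentially the paper's argument. The paper also starts from the derivative formula (\ref{der1}), writes the variation of $\mathscr{L}_nf/\ln_\mu$ as $\int_0^1|(\mathscr{L}_nf/\ln_\mu)'(x)|\,dx$, applies the triangle inequality, and uses $\int_0^1 p_{n-1,k}(a_n(x))\,a_n'(x)\,dx=1/n$; this is your ``alternative'' route, and it is equivalent to your reparametrization-invariance step. Your remark that $\mathscr{L}_nf(0)=f(0)$ holds exactly, because $a_n(0)=0$, is slightly sharper than the inequality the paper states for the boundary term.
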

\begin{proof}

First, notice that the total variation of $\mathscr{L}_nf/\ln_\mu$ can be expressed as an integral over $[0,1]$. Indeed, by (\ref{der1}), 
\begin{align*}
\left|\left(\frac{\mathscr{L}_nf}{\ln_\mu}\right)'(x)\right|   \le {2\over \mu+1} \left[\ln\left(1+\frac{1}{n(1+\mu)}\right)\right]^{-1} \Vert f_{\mu}\Vert_{\infty}
\end{align*}
Now, since the expression \eqref{der1} holds, we obtain that 
\begin{align*} \displaystyle
Var_{[0,1]}\left( \frac{\mathscr{L}_n f}{\ln_\mu}\right) & = \int_0^1 \left| \left( \frac{\mathscr{L}_n f}{\ln_\mu} \right)'(x)\right| dx \\
& = \int_0^1 n \hspace{1mm} a_n'(x) \sum_{k=0}^{n-1} \left[f_\mu\left(\frac{k+1}{n}\right)-f_\mu\left(\frac{k}{n}\right)\right]p_{n-1,k}(a_n(x))dx
\\& \leq n\sum_{k=0}^{n-1} \left|f_\mu\left(\frac{k+1}{n}\right)-f_\mu\left(\frac{k}{n}\right)\right| \int_0^1 p_{n-1,k}(a_n(x)) a_n'(x)dx \\
& \leq \sum_{k=0}^{n-1} \left|f_\mu\left(\frac{k+1}{n}\right)-f_\mu\left(\frac{k}{n}\right)\right|\leq     Var_{[0,1]}\left(f_\mu\right).
\end{align*} On the other hand \begin{align*} \displaystyle
\left|\left| {\mathscr{L}_n f}\right|\right|_{BV_\mu} & = Var_{[0,1]}\left(\frac{\mathscr{L}_n f}{\ln_\mu} \right)+|\mathscr{L}_n (f,0)| \\ &
\leq Var_{[0,1]}\left(\frac{f}{\ln_\mu} \right)+|f(0)| = \left|\left| f \right|\right|_{BV_\mu},
\end{align*} whence the thesis.\\
\end{proof}
We now show that the operators $\mathscr{L}_n$ satisfy a monotonicity property, similar to a relation previously obtained in~\cite{gonska2009king} for King-type operators.
\begin{proposition}
(a) If $f_{\mu}(x)$ is increasing and convex, then, for every $n \in \mathbb{N}$ and $x \in ]0,1[$,
\[
\mathscr{L}_{n}(f,x) \ge \mathscr{L}_{n+1}(f,x) \ge f(x).
\]
(b) If $f_{\mu}(x)$ is decreasing and concave, then, for every $n \in \mathbb{N}$ and $x \in ]0,1[$,
\[
\mathscr{L}_{n}(f,x) \le \mathscr{L}_{n+1}(f,x) \le f(x).
\]
\end{proposition}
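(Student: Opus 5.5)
We reduce everything to the representation \eqref{Rel}, $\mathscr{L}_n(f,x)=\ln_\mu(x)\,B_n(f_\mu,a_n(x))$. Since $\ln_\mu(x)>0$, the statement is equivalent to
\[
B_n(f_\mu,a_n(x))\ \ge\ B_{n+1}(f_\mu,a_{n+1}(x))\ \ge\ f_\mu(x)
\]
in case (a), and to the reversed inequalities in case (b). Case (b) follows from (a) by applying it to $-f$: then $(-f)_\mu=-f_\mu$ is increasing and convex, and linearity reverses every inequality. So we only prove (a).

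We split the middle comparison into two steps:
\[
B_n(f_\mu,a_n(x))\ \ge\ B_{n+1}(f_\mu,a_n(x))\ \ge\ B_{n+1}(f_\mu,a_{n+1}(x)).
\]
The first inequality is the classical monotonicity of Bernstein polynomials in $n$ for convex functions. For every $y\in[0,1]$ and convex $g$ we have $B_n(g,y)\ge B_{n+1}(g,y)$. This follows from the standard identity expressing $B_n(g,y)-B_{n+1}(g,y)$ as $\frac{y(1-y)}{n(n+1)}$ times a nonnegative combination of second divided differences of $g$ at the nodes $\frac{k}{n+1},\frac{k}{n},\frac{k+1}{n}$ (see \cite{ConApp}). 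We apply it with $g=f_\mu$ and $y=a_n(x)\in[0,1]$.

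The second inequality uses two facts. First, $B_{n+1}(f_\mu,\cdot)$ is increasing, because $f_\mu$ is increasing; this is the derivative formula of type \eqref{der1}, whose first differences of $f_\mu$ are nonnegative. Second, $a_n(x)\ge a_{n+1}(x)$ for all $x\in[0,1]$. To prove the latter, write $a_n(x)=\phi(\varepsilon_n)$ with $\phi(\varepsilon)=\ln(1+x\varepsilon)/\ln(1+\varepsilon)$. It then suffices that $\phi$ is increasing in $\varepsilon>0$ for fixed $x\in(0,1)$, since $\varepsilon_n=\frac{1}{n(1+\mu)}$ decreases in $n$. Now $\phi(\varepsilon)\ge \phi(\varepsilon')$ for $\varepsilon>\varepsilon'$ is equivalent to saying that $u\mapsto \ln(1+xu)/\ln(1+u)$ is increasing. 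This is a standard convexity fact: the function $\psi(t)=\ln(1+e^{t})$ has $\psi'(t)=e^t/(1+e^t)$ increasing in $t$. Taking logarithms of the derivative condition reduces it to $\frac{xu}{1+xu}\ln(1+u)\ge \frac{u}{1+u}\ln(1+xu)$. This holds because $s\mapsto \frac{(1+s)\ln(1+s)}{s}$ is increasing, applied with $s=xu<u$. Establishing this monotonicity of $a_n$ in $n$ cleanly is the step we expect to need the most care.

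For the last inequality $B_{n+1}(f_\mu,a_{n+1}(x))\ge f_\mu(x)$, we use convexity of $f_\mu$ through Jensen's inequality: $B_{n+1}(f_\mu,y)\ge f_\mu\big(B_{n+1}(e_1,y)\big)=f_\mu(y)$. With $y=a_{n+1}(x)\ge x$, as noted after \eqref{an}, and $f_\mu$ increasing, this gives $f_\mu(a_{n+1}(x))\ge f_\mu(x)$. Multiplying the resulting chain by $\ln_\mu(x)>0$ yields (a).
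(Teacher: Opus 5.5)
Your proof is correct. For the comparison $\mathscr{L}_n(f,x)\ge\mathscr{L}_{n+1}(f,x)$ it follows exactly the paper's decomposition through the intermediate term $B_{n+1}(f_\mu,a_n(x))$.

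There you actually go further than the paper, which simply asserts $a_n(x)\ge a_{n+1}(x)$. You reduce this to the monotonicity of $\varepsilon\mapsto\ln(1+x\varepsilon)/\ln(1+\varepsilon)$, and then to that of $h(s)=(1+s)\ln(1+s)/s$. Three small fixes:
\begin{itemize}
\item Add the one-line check $h'(s)=(s-\ln(1+s))/s^2>0$.
\item Drop the remark about $\psi'(t)=e^t/(1+e^t)$ being increasing; the argument does not use it.
\item The nodes in the classical identity for $B_n-B_{n+1}$ are $\frac{k}{n},\frac{k+1}{n+1},\frac{k+1}{n}$, not the ones you list.
\end{itemize}

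The genuine difference lies in the lower bound $\mathscr{L}_{n+1}(f,x)\ge f(x)$. The paper obtains it asymptotically: since $f_\mu$ is convex, $f$ is continuous on $]0,1[$, so Theorem~\ref{th-conv} gives $\mathscr{L}_n(f,x)\to f(x)$, and a nonincreasing sequence stays above its limit. You argue directly instead. You combine Jensen's inequality $B_m(f_\mu,y)\ge f_\mu(B_m(e_1,y))=f_\mu(y)$ with $a_m(x)\ge x$ and the monotonicity of $f_\mu$.

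Your route is elementary and non-asymptotic. It needs neither the convergence theorem nor any continuity. It gives $\mathscr{L}_m(f,x)\ge f(x)$ for each $m$, independently of the decreasing property in $n$. It also holds, with equality, at the endpoints.

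The paper's route avoids using $a_n(x)\ge x$ and recycles a result already proved. Note, however, that your monotonicity of $\varepsilon\mapsto\ln(1+x\varepsilon)/\ln(1+\varepsilon)$ yields $a_n(x)\ge\lim_{\varepsilon\to 0^+}\ln(1+x\varepsilon)/\ln(1+\varepsilon)=x$ for free.
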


\begin{proof}
We will prove part $(a)$ since part $(b)$ follows immediately applying the same considerations of $(a)$ to the function $-f(x)$, so that $-f_{\mu}(x)$ will be increasing and convex.

We recall that the new operator $\mathscr{L}_n$ is related to the classical Bernstein operators through the relation~\eqref{Rel}, namely,
\[
\mathscr{L}_n(f,x) = \ln_\mu(x) \cdot B_n\left(f_\mu, a_n(x)\right).
\]
Since $f_\mu(x)$ is convex on $[0,1]$, as a consequence of the classical shape preserving properties of Bernstein operators (see~\cite{ConApp}), we have that
\begin{align*}
    B_n(f_\mu,y) \ge B_{n+1}(f_\mu,y), 
\end{align*}
for every $y \in [0,1]$. Moreover, since $a_{n}(x) \ge a_{n+1}(x)$ for every $x \in [0,1]$, $n\in\N$, and $f_\mu(x)$ is increasing, it follows that
\begin{align*}
    B_{n+1}(f_\mu,a_n(x)) \ge B_{n+1}(f_\mu,a_{n+1}(x)).
\end{align*}
Combining these, we obtain
\begin{align*}
\mathscr{L}_n(f,x) - \mathscr{L}_{n+1}(f,x)
&= \ln_\mu(x)\left[B_n(f_\mu,a_n(x)) - B_{n+1}(f_\mu,a_{n+1}(x))\right] \\[0.5em]
&= \ln_\mu(x)\left[B_n(f_\mu,a_n(x)) - B_{n+1}(f_\mu,a_n(x)) \right. \\[0.5em]
& \quad\left. + B_{n+1}(f_\mu,a_n(x)) - B_{n+1}(f_\mu,a_{n+1}(x)) \right] \ge 0,
\end{align*}
namely $\mathscr{L}_n(f,x) \ge \mathscr{L}_{n+1}(f,x)$ for all $n\in\N$.  

Finally, $\mathscr{L}_n(f,x) \to f(x)$, as $n\to+\infty$, on every continuity point $x$ of $f(x)$ (Theorem \ref{th-conv}), therefore on every  $x\in ]0,1[$:  indeed, $f_{\mu}(x)$ is continuous in $]0,1[$, since it is convex, and so is $f(x)=f_{\mu}(x)\cdot \ln_{\mu}(x)$. Therefore, since 
 $\mathscr{L}_n(f,x)$ is decreasing with respect to $n$, we deduce that 
\begin{align*}
\mathscr{L}_{n}(f,x) \ge \mathscr{L}_{n+1}(f,x) \ge f(x), \quad \text{for all } n \in \mathbb{N}, \quad x\in ]0,1[.
\end{align*}\end{proof}

\section{Final remarks and future developments: a simple application to signal denoising}

In this paper, we introduced a new definition of positive linear operators, which is characterized by the preservation of the function $\ln_\mu(x)$. For such operators, we established a number of approximation properties, such as uniform convergence, Voronovskaja-type formula, saturation and inverse theorems based on the solution of a suitable differential problem, and shape preserving properties.

This new constructive approximation tool can be viewed as a reconstruction process based on suitable powers involving logarithmic functions. 

The main properties of ${\mathscr L}_n$ discussed in the previous sections suggest the possibility of deducing a denoising algorithm for signals affected by multiplicative noise.

In real world situations, it can occur that a given signal $f$ is characterized by the presence of multiplicative noise, such as Gaussian multiplicative noise, uniform multiplicative noise, and several others. For instance, an example of multiplicative noise is the so-called speckle (e.g., \cite{Aubert, Seelamantula2015}), that typically affects remote sensing data, or tomography and ultrasound echography images. Other cases in which multiplicative noise arises is in stochastic models for finance or in wireless signal applications.

Below, we present a very simple possible application of the logarithmic operators ${\mathscr L}_n$ for denoising; the main purpose of the following computations is to illustrate a possible future development of the above approximation tools.
\vskip0.2cm

Let $f:[0,1]\to \R^+$ be a given signal modelling a physical quantity. Suppose that $f$ is sampled (by a certain measurement tool) at a certain time $t>0$ in the uniformly spaced nodes $k/n$, $k=0,1,...,n$, $n \in \N$, obtaining a sequence of measured sample values $(y_{k,n})_k$.

We now suppose that $f$ is affected by a multiplicative noise function of the form: $n(x,t)=1+\mu(t)+x$, where the term $\mu(t)$ will be now viewed as a Gaussian random variable of mean $0$ and variance $\sigma^2$, namely 
$$
\mu(t) \sim {\cal N}(0,\sigma^2).
$$
Hence, we expect that the sampled values $(y_{k,n})_k$, measured at time $t>0$, are of the form:
$$
y_{k,n} \approx \left( 1+\mu(t)+ \frac{k}{n}\right)\, f\left(\frac{k}{n}\right).
$$
Let now consider $t>0$ for which $\mu(t)>0$ and define $g(x):=\left( 1+\mu(t)+ x\right) f(x)$; thus
$$
\ln (g(x))\ =\ \ln_{\mu(t)}(x)\, +\, \ln (f(x)).
$$
Applying now the logarithmic operators ${\mathscr L}_n$ with $\mu(t)$ in place of the previously used parameter $\mu$, we get, by (\ref{repr}):
$$
{\mathscr L}_n(\ln g,x)\, =\ \ln_{\mu(t)}(x)\, +\, {\mathscr L}_n(\ln f,x), \quad x \in [0,1].
$$
From the above relation, it is very simple to deduce the following denoising formula:
\be \label{denoising-method-2}
f(x)\ \approx\ e^{{\mathscr L}_n(\ln f,x)} \ =\ {\displaystyle e^{{\mathscr L}_n(\ln g,x)} \over \displaystyle 1 + \mu(t) + x },
\ee
where in (\ref{denoising-method-2}) we replaced ${\mathscr L}_n(\ln g,x)$ with its approximated version computed by means of the sampled values $y_{k,n}$, namely
\begin{align*}
{\mathscr L}_n(\ln g,x)\ \approx\ \ln_{\mu(t)}(x)\sum_{k=0}^n \frac{\ln (y_{k,n})}{\ln_{\mu(t)}\left(\frac{k}{n}\right)}\hspace{1mm}p_{n,k}(a_n(x)).
\end{align*}
In conclusion, the final denoising formula is:
\be \label{denoising-method}
f(x)\ \approx\ {1  \over \displaystyle 1 + \mu(t) + x }\, \exp\left[ \ln_{\mu(t)}(x)\sum_{k=0}^n \frac{\ln (y_{k,n})}{\ln_{\mu(t)}\left(\frac{k}{n}\right)}\hspace{1mm}p_{n,k}(a_n(x))    \right].
\ee
We now present a numerical example. Let us consider, for instance, three values of $\mu(t)$ randomly generated as a Gaussian random variable ${\cal N}(0,\sigma^2)$, with $\sigma=0.5$
$$
\mu_1 := \mu(t_1) = 0.2688, \quad   \mu_2 := \mu(t_2) = 0.9169, \quad  \mu_3 := \mu(t_3) = 1.1294, 
$$
and let $f:[0,1]\to\R^+$ defined as
$$
f(x)\ :=\ \frac15\, x^2 + \sin(x)+ \frac12 x+ \frac{1}{10}.
$$
We now consider the following noise-free reconstructions of $f$, for $n=10,30$ in Fig.s \ref{fig1}, \ref{fig2} and \ref{fig3}, implemented by MATLAB software. 
In the captions it is also reported the maximum reconstruction error computed between the original signal $f$ and the reconstructed one by means of (\ref{denoising-method}) on the considered nodes. 

\begin{figure}[H]
\centering
\includegraphics[scale=0.27]{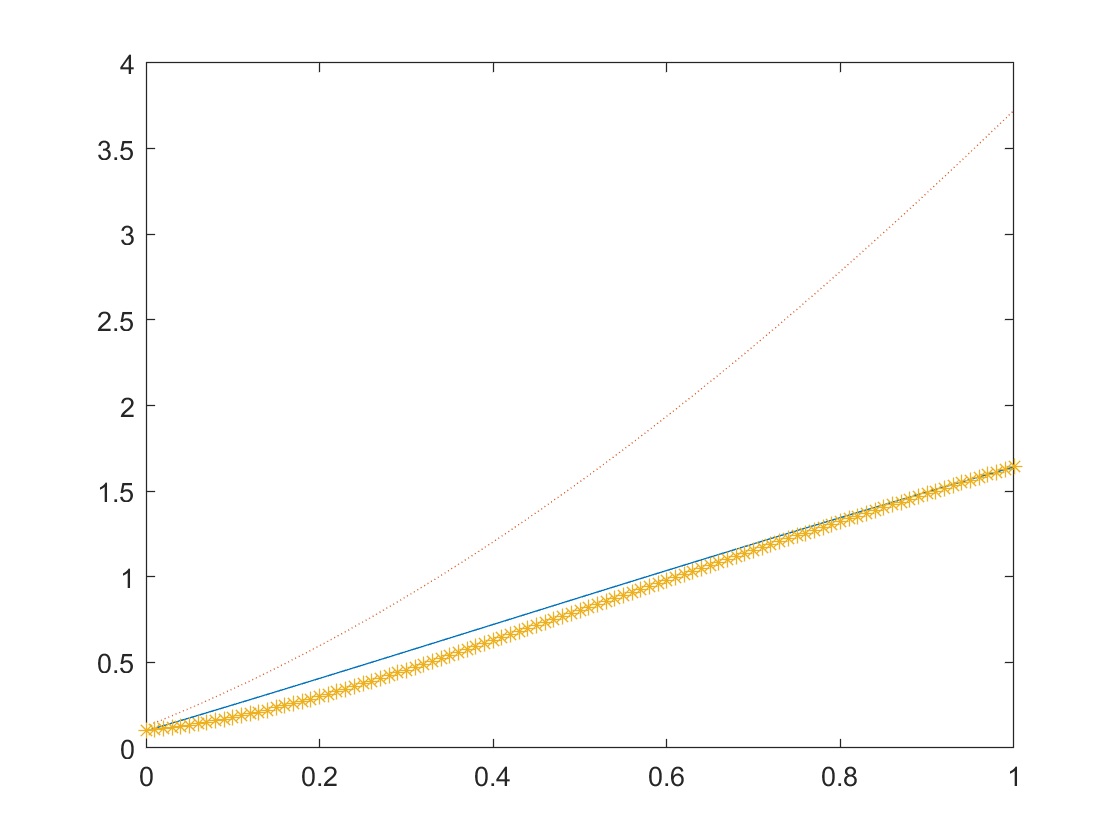}
\\[-1.5em]
\includegraphics[scale=0.27]{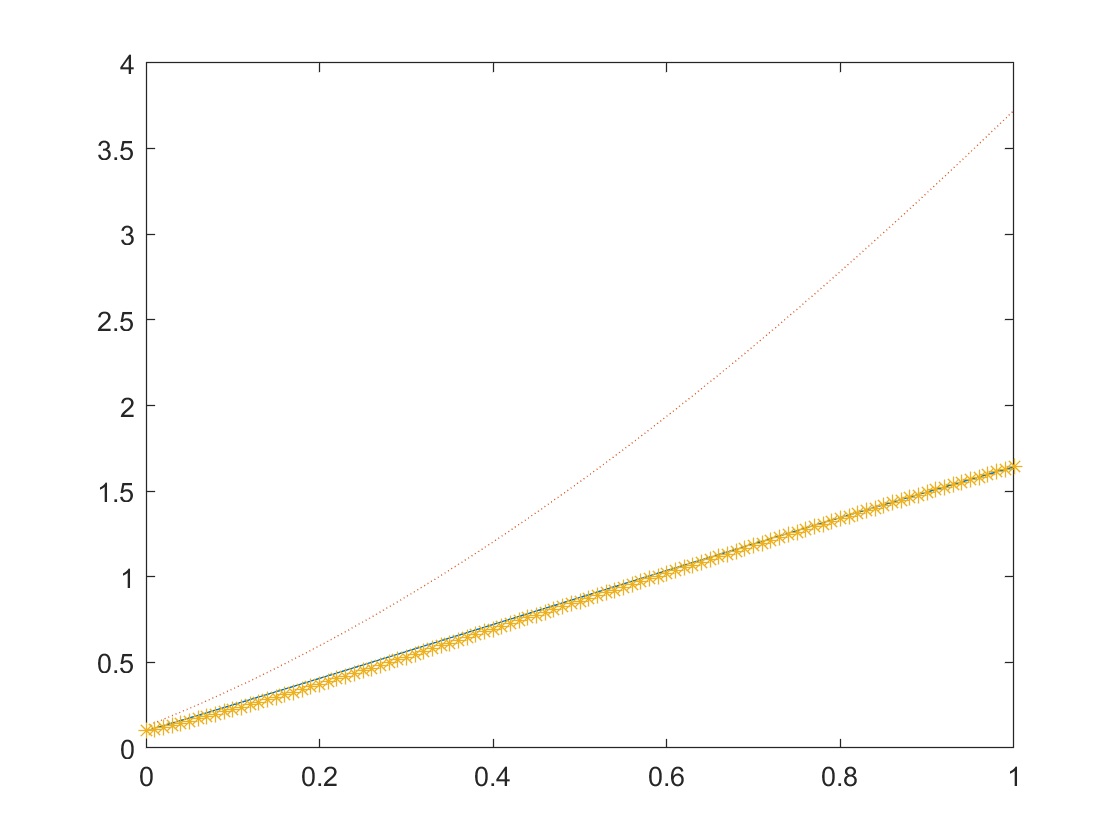}
\\[-1em]
\caption{{\small The (blue) solid line represents the original signal $f$, the dotted (red) plot represents the signal $g$ affected by the multiplicative noise, the asterisk (yellow) plot represent the denoised signal by (\ref{denoising-method}) for $\mu_1$, with $n=10$ (on the top) and $n=30$ (on the bottom). The maximum reconstruction errors are $0.1109$ and $0.0343$, respectively.}} \label{fig1}
\end{figure}
\begin{figure}[H]
\centering
\includegraphics[scale=0.27]{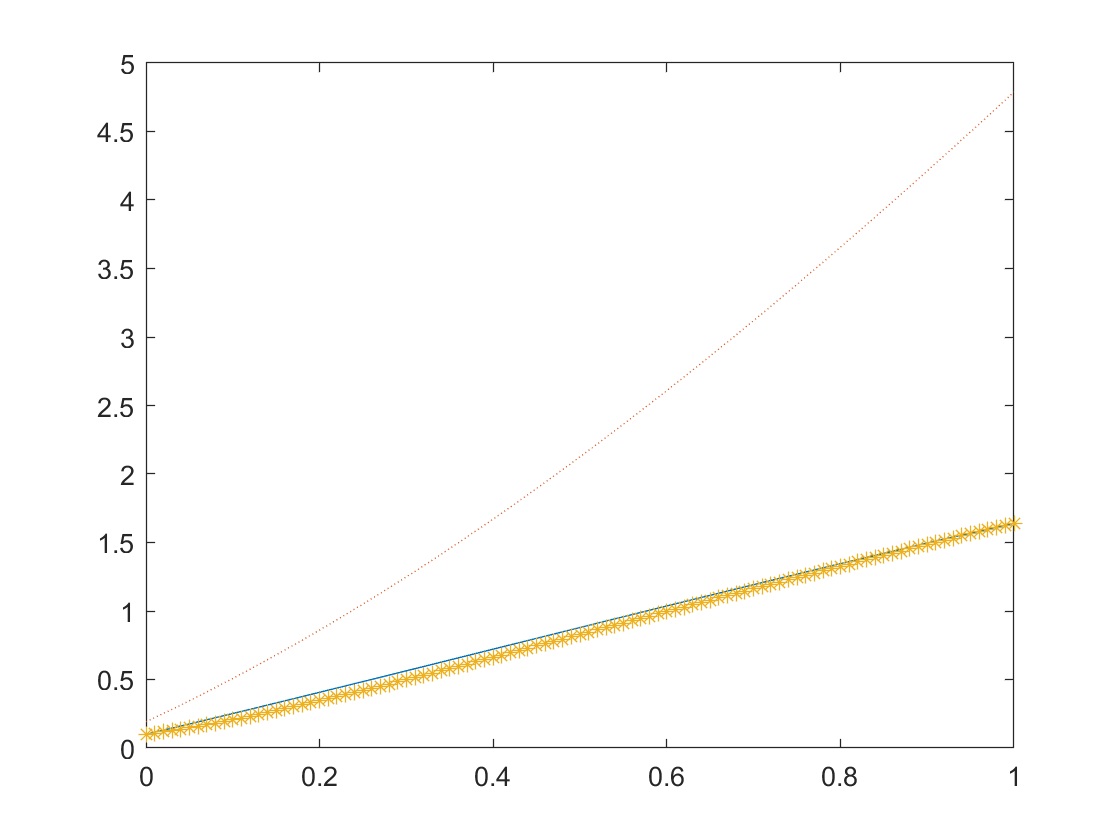}
\\[-1.5em]
\includegraphics[scale=0.27]{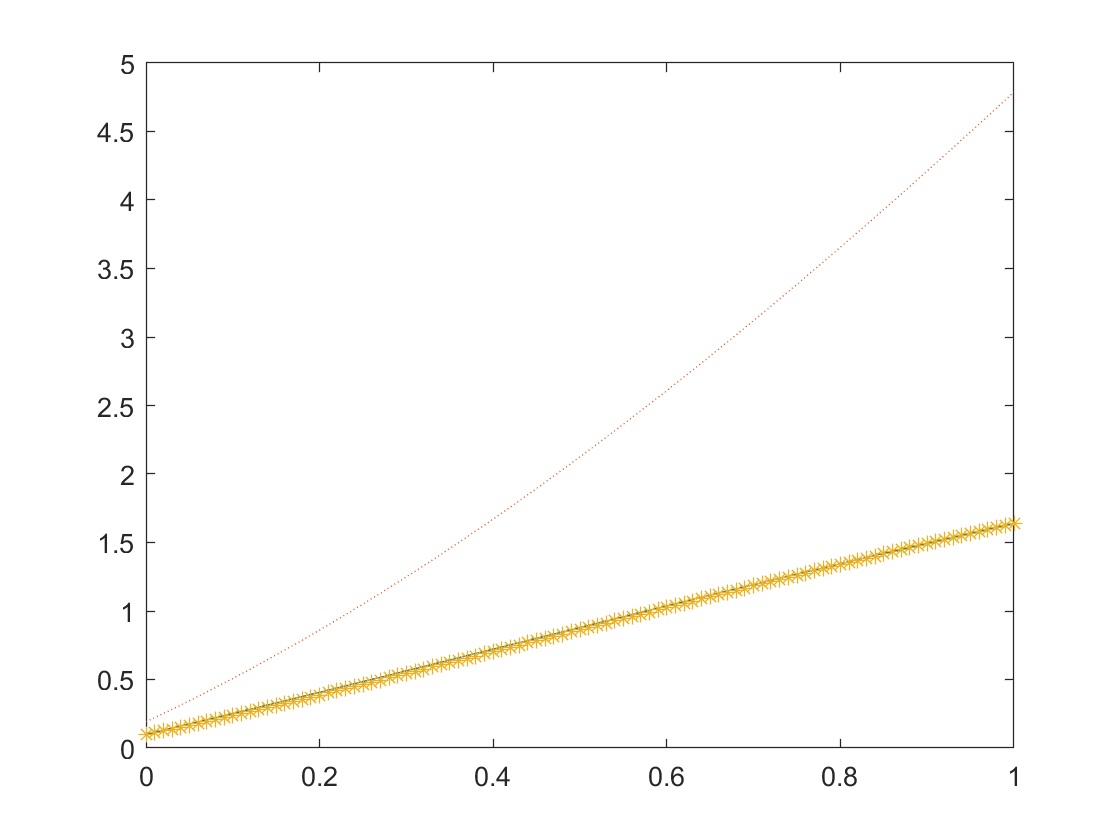}
\\[-1em]
\caption{{\small The (blue) solid line represents the original signal $f$, the dotted (red) plot represents the signal $g$ affected by the multiplicative noise, the asterisk (yellow) plot represent the denoised signal by (\ref{denoising-method}) for $\mu_2$, with $n=10$ (on the top) and $n=30$ (on the bottom). The maximum reconstruction error are $0.0658$ and $0.0202$, respectively.}} \label{fig2}
\end{figure}
\begin{figure}[H]
\centering
\includegraphics[scale=0.27]{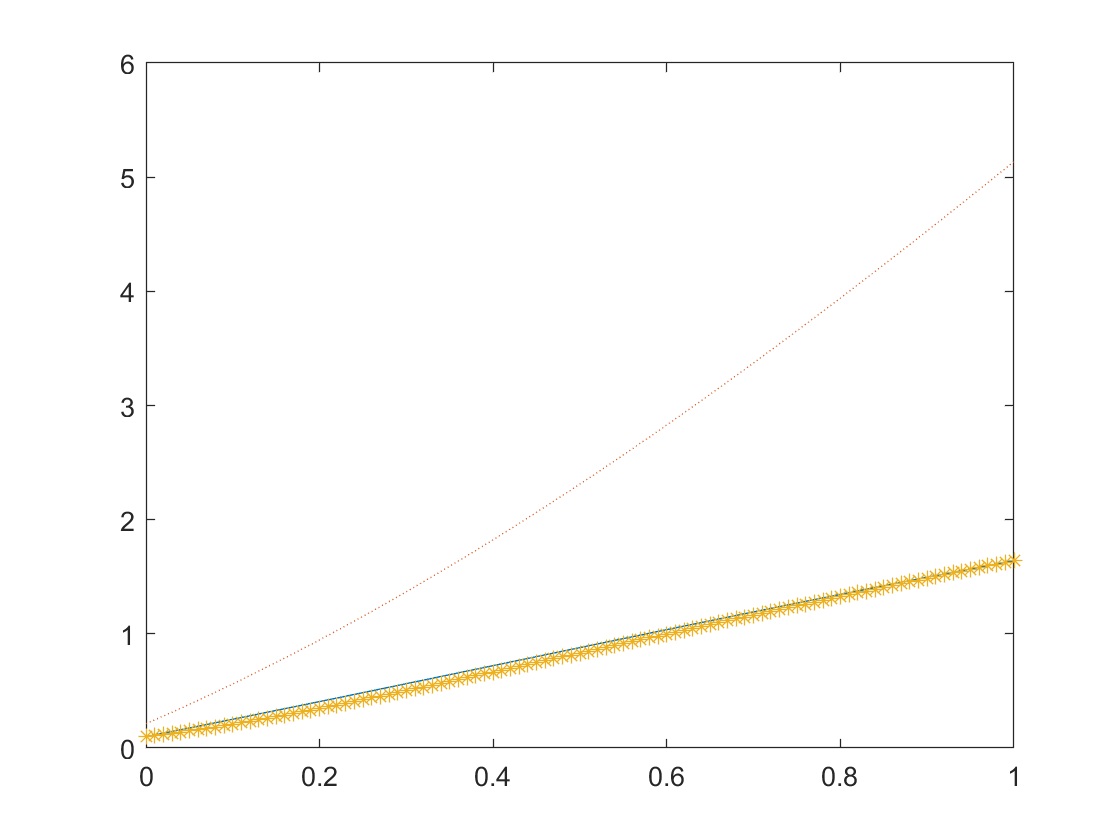}
\end{figure}
\begin{figure}[h]
\centering
\includegraphics[scale=0.25]{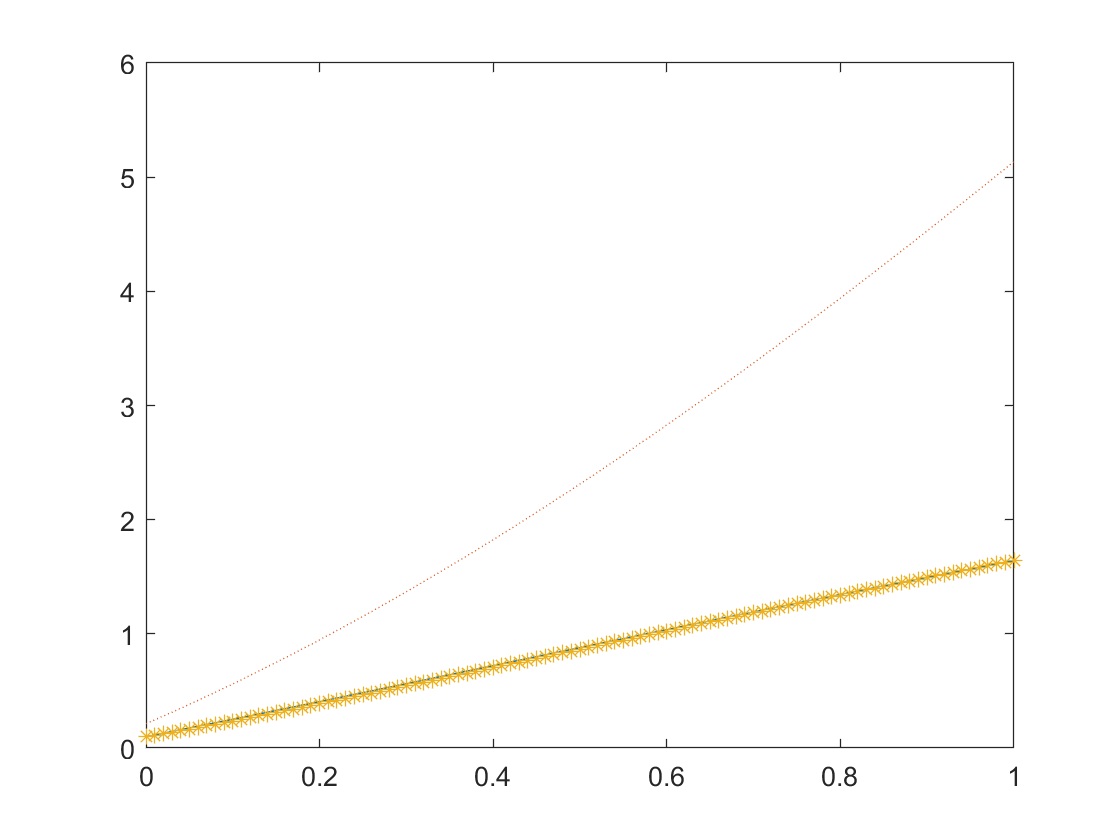}
\\[-1em]
\caption{{\small The (blue) solid line represents the original signal $f$, the dotted (red) plot represents the signal $g$ affected by the multiplicative noise, the asterisk (yellow) plot represent the denoised signal by (\ref{denoising-method}) for $\mu_3$, with $n=10$ (on the top) and $n=30$ (on the bottom). The maximum reconstruction errors are $0.0622$ and $0.0191$, respectively.}} \label{fig3}
\end{figure}
Obviously, the above experiment represents only a {\em toy model} which serves to introduce a possible application of the new logarithmic-type operators; the main idea was to exploit the logarithm-preservation property besides the approximation properties of ${\mathscr L}_n$ to linearize a nonlinear source of noise. In future works, we aim to improve the above simple model, for instance, in order to deal with multivariate signals to implement a despeckle algorithm for remote sensing images (see, e.g. \cite{despeckle1,speckle2,WL2025speckle}).

\section*{Acknowledgements}
{\small The authors would like to deeply thank the anonymous Referees for their valuable comments and useful suggestions that improved the paper. The authors are members of the Gruppo Nazionale per l'Analisi Matematica, la Probabilit\`a e le loro Applicazioni (GNAMPA) of the Istituto Nazionale di Alta Matematica (INdAM), of the network RITA (Research ITalian network on Approximation), and of the UMI (Unione Matematica Italiana) group T.A.A. (Teoria dell'Approssimazione e Applicazioni). 
}

\section*{Funding}
{\small The authors L. Angeloni and D. Costarelli have been partially supported within the (1) "National Innovation Ecosystem grant ECS00000041 - VITALITY", funded by the European Union - Next-GenerationEU under the Italian Ministry of University and Research (MUR), (2) PRIN 2022 PNRR: ``RETINA: REmote sensing daTa INversion with multivariate functional modeling for essential climAte variables characterization", funded by the European Union under the Italian National Recovery and Resilience Plan (NRRP) of NextGenerationEU, under the Italian Ministry of University and Research (Project Code: P20229SH29, CUP: J53D23015950001), and (3) 2025 GNAMPA-INdAM Project "MultiPolExp: Polinomi di tipo esponenziale in assetto multidimensionale e multivoco" (CUP E5324001950001).}

\section*{Conflict of interest/Competing interests}
{\small The authors declare that they have no conflict of interest and competing interests.}

\section*{Availability of data and material and Code availability}
{\small Not applicable.}

\bibliographystyle{plain}

\begin{thebibliography}{32}

\bibitem{acar2017szasz}
T. Acar, A. Aral, D. C\'ardenas-Morales and P. Garrancho. Sz{\'a}sz--Mirakyan type operators which fix exponentials. \textit{Results in Mathematics}, 72:1393--1404, 2017.

\bibitem{acar2017gonska}
T. Acar, A. Aral and H. Gonska. On Sz{\'a}sz--Mirakyan operators preserving $e^{2ax}$, $a>0$. \textit{Mediterranean Journal of Mathematics}, 14:1--14, 2017.

\bibitem{acar2020gamma}
T. Acar, M. Mursaleen and S. N. Deveci. Gamma operators reproducing exponential functions. \textit{Advances in Difference Equations}, 2020:1--13, 2020.

\bibitem{Acu2022a}
A. M. Acu, A. Aral and I. Ra\c{s}a. Generalized Bernstein Kantorovich operators: Voronovskaya type results, convergence in variation. \textit{Carpathian Journal of Mathematics}, 38:1--12, 2022.

\bibitem{Altomare2010}
F. Altomare. Korovkin-type theorems and approximation by positive linear operators. \textit{Surveys in Approximation Theory}, 6:92--164, 2010.

\bibitem{AltoCam}
F. Altomare and M. Campiti. \textit{Korovkin-type Approximation Theory and its applications.} Walter de Gruyter \& Co., Berlin, New York, 1994.

\bibitem{Angeloni2024}
L. Angeloni and D. Costarelli. Approximation by exponential-type polynomials. \textit{Journal of Mathematical Analysis and Applications}, 532:127927, 2024.

\bibitem{Angeloni2025}
L. Angeloni, D. Costarelli and C. Darielli. Approximation processes by multidimensional Bernstein-type exponential polynomials on the hypercube. \textit{Revista de la Real Academia de Ciencias Exactas, F\'isicas y Naturales - Serie A: Matem\'aticas}, 119(28):1--15, 2025.

\bibitem{BerType}
A. Aral, D. C{\'a}rdenas-Morales and P. Garrancho. Bernstein-type operators that reproduce exponential functions. \textit{Journal of Mathematical Inequalities}, 12:861--872, 2018.

\bibitem{Aral2019} 
A. Aral, M. L. Limmam and F. Ozsara\c{c}. Approximation properties of Sz{\'a}sz--Mirakyan--Kantorovich type operators. \textit{Mathematical Methods in the Applied Sciences}, 42(16):5233-5240, 2019. 

\bibitem{despeckle1} 
F. Argenti, A.  Lapini, T. Bianchi and L. Alparone. A tutorial on speckle reduction in synthetic aperture radar images. \textit{IEEE Geoscience and remote sensing magazine}, 1(3):6--35, 2013.

\bibitem{Aubert} 
G. Aubert and J.F. Aujol. A variational approach to removing multiplicative noise. \textit{SIAM journal on applied mathematics}, 68(4):925-946, 2008.

\bibitem{BABO1964}
B. Bajsanski and R. Bojanic. A note on approximation by Bernstein polynomials. \textit{Bulletin of the American Mathematical Society}, 70:675--677, 1964.

\bibitem{bardaro2017}
C. Bardaro, L. Faina and I. Mantellini. A generalization of the exponential sampling series and its approximation properties. \textit{Mathematica Slovaca}, 67(6):1481--1496, 2017.

\bibitem{bardaro2019}
C. Bardaro, I. Mantellini and G. Schmeisser. Exponential sampling series: convergence in Mellin--Lebesgue spaces. \textit{Results in Mathematics}, 74:1--20, 2019.

\bibitem{bede2016}
B. Bede, L. Coroianu and S. G. Gal. \textit{Approximation by max-product type operators.} Springer, Cham, Switzerland, 2016.

\bibitem{Bohman}
H. Bohman. On approximation of continuous and analytic functions.
\textit{Arkiv f{\"o}r Matematik}, 2:43--46, 1952--54.

\bibitem{Devore1972}
R. A. DeVore. \textit{The Approximation of Continuous Functions by Positive Linear Operators}, volume 293 of \textit{Lecture Notes in Mathematics}. Springer, Berlin, 1972.

\bibitem{ConApp}
R. A. DeVore and G. G. Lorentz. Constructive Approximation.
Springer-Verlag Berlin, Heidelberg, New York, 1 edition, 1993.

\bibitem{finta2023} 
Z. Finta. King operators which preserve $x^j$. \textit{Constructive Mathematical Analysis}, 6(2):90-101, 2023. 

\bibitem{GACA2010}
P. Garrancho and D. C{\'a}rdenas-Morales. A converse of asymptotic formulae in simultaneous approximation. \textit{Applied Mathematics and Computation}, 217:2676--2683, 2010.

\bibitem{gonska2009king}
H. Gonska, P. Pit\c{u}l and I. Ra\c{s}a. General King-Type Operators.
\textit{Results in Mathematics}, 53:279--286, 2009.

\bibitem{gupta2022} 
V. Gupta, A. Aral and F. Özsara\c{s}. On semi-exponential Gauss–Weierstrass operators. \textit{Analysis and Mathematical Physics}, 12(5):111, 2022. 

\bibitem{gupta2024conv}
V. Gupta. Convergence of operators based on some special functions.
\textit{Revista de la Real Academia de Ciencias Exactas, F{\'i}sicas y Naturales - Serie A: Matem{\'a}ticas}, 118(3):99, 2024.

\bibitem{gupta2024new}
V. Gupta. New operators based on Laguerre polynomials. \textit{Revista de la Real Academia de Ciencias Exactas, F{\'i}sicas y Naturales - Serie A: Matem{\'a}ticas}, 118(1):19, 2024.

\bibitem{gupta2020}
V. Gupta and G. Agrawal. Approximation for modification of exponential type operators connected with $x(x+1)^2$. \textit{Revista de la Real Academia de Ciencias Exactas, F{\'i}sicas y Naturales - Serie A: Matem{\'a}ticas}, 114(3):158, 2020.

\bibitem{gupta2018}
V. Gupta and A. Aral. A note on Sz{\'a}sz--Mirakyan--Kantorovich type operators preserving $e^{-x}$. \textit{Positivity}, 22:415--423, 2018.



\bibitem{kadak2022}
U. Kadak. Multivariate neural network interpolation operators.
\textit{Journal of Computational and Applied Mathematics}, 414:114426, 2022.

\bibitem{KAST1966}
S. J. Karlin and W. J. Studden. \textit{Tchebycheff Systems.} Interscience, New York, 1966.

\bibitem{king}
J. King. Positive linear operators which preserve $x^2$. \textit{Acta Mathematica Hungarica}, 99(3):203--208, 2003.

\bibitem{Korovkin1953}
P. P. Korovkin. Convergence of linear positive operators in the spaces of continuous functions (Russian). In \textit{Doklady Akademii Nauk SSSR (NS)}, volume 90, page 961, 1953.

\bibitem{Morigi}
S. Morigi and M. Neamtu. Some results for a class of generalized polynomials. \textit{Advances in computational mathematics}, 12:133--149, 2000.


\bibitem{ozsarac2023} 
F. Özsara\c{c}, A. M. Acu, A. Aral and I. Ra\c{s}a. On the modification of Mellin convolution operator and its associated information potential. \textit{Numerical Functional Analysis and Optimization}, 44(11):1194-1208, 2023. 

\bibitem{ozsarac2022}
F. Özsara\c{c}, V. Gupta and A. Aral. Approximation by some Baskakov--Kantorovich exponential-type operators. \textit{Bulletin of the Iranian Mathematical Society}, 48(1):227--241, 2022.

\bibitem{speckle2} 
R. Ren, Z. Guo, Z. Jia, J. Yang,  N. K. Kasabov and C. Li. Speckle noise removal in image-based detection of refractive index changes in porous silicon microarrays. \textit{ Scientific Reports}, 9(1):15001, 2019.

\bibitem{Mond}
O. Shisha and B. Mond. The degree of convergence of sequences of linear positive operators. \textit{Proceedings of the National Academy of Sciences}, 60:1196--1200, 1968.

\bibitem{Seelamantula2015} 
C. S. Seelamantula and T. Blu. Image denoising in multiplicative noise. \textit{IEEE International Conference on Image Processing (ICIP)}, 1528-1532, 2015.

\bibitem{Topuz2024} 
C. Topuz, F. Ozsara\c{c} and A. Aral. On the generalized Mellin integral operators. \textit{Demonstratio Mathematica}, 57(1):20230133, 2024. 

\bibitem{WL2025speckle}
J. Wei and X. Liao. Dynamical Threshold-Based Fractional Anisotropic Diffusion for Speckle Noise Removal. \textit{IEEE Transactions on Image Processing}, 34:2826--2839, 2025.

\bibitem{Convexity}
Z. Ziegler. Linear approximation and generalized convexity. \textit{Journal of Approximation Theory}, 1:420--443, 1968.

\end{thebibliography}

\end{document}